\title{A family of Beckner inequalities under various curvature-dimension conditions}
\date{}
\author{Ivan Gentil\thanks{\texttt{\href{mailto:gentil@math.univ-lyon1.fr}{gentil@math.univ-lyon1.fr}}}}
\author{Simon Zugmeyer\thanks{\texttt{\href{mailto:zugmeyer@math.univ-lyon1.fr}{zugmeyer@math.univ-lyon1.fr}}}}
\affil{Univ Lyon, Université Claude Bernard Lyon 1, CNRS UMR 5208, Institut Camille Jordan, 43 blvd. du 11 novembre 1918, F-69622 Villeurbanne cedex, France.}
\newtheorem{thrm}{Theorem}[section]
\newtheorem{prop}[thrm]{Proposition}
\newtheorem{coro}[thrm]{Corollary}
\newtheorem{lemm}[thrm]{Lemma}
\theoremstyle{definition}
\newtheorem{defi}[thrm]{Definition}
\theoremstyle{remark}
\newtheorem{rmrk}[thrm]{Remark}
\newcommand{\R}{\mathbb{R}}
\newcommand{\vphi}{\varphi}
\newcommand{\eps}{\varepsilon}
\newcommand{\ol}{\overline}
\DeclareMathOperator{\Var}{Var}
\DeclareMathOperator{\Hess}{Hess}
\DeclareMathOperator{\Ent}{Ent}
\DeclareMathOperator{\Ric}{Ric}
\DeclareMathOperator{\Id}{Id}
\DeclarePairedDelimiter{\norm}{\lVert}{\rVert} 
\DeclarePairedDelimiter{\abs}{\lvert}{\rvert}
\DeclarePairedDelimiter{\sbra}{[}{]}
\DeclarePairedDelimiter{\pare}{(}{)}
\renewcommand\d[1]{\mspace{2mu}\mathrm{d}#1\@ifnextchar\d{\mspace{-1mu}}{}}
\let\oldabs\abs
\def\abs{\@ifstar{\oldabs}{\oldabs*}}
\let\oldnorm\norm
\def\norm{\@ifstar{\oldnorm}{\oldnorm*}}
\let\oldsbra\sbra
\def\sbra{\@ifstar{\oldsbra}{\oldsbra*}}
\let\oldpare\pare
\def\pare{\@ifstar{\oldpare}{\oldpare*}}
\begin{document}

\maketitle

\begin{abstract}
  In this paper, we offer a proof for a family of functional inequalities interpolating between the Poincaré and the logarithmic Sobolev (standard and weighted) inequalities. The proofs rely both on entropy flows and on a \(CD(\rho,n)\) condition, either with \(\rho=0\) and \(n>0\), or with \(\rho>0\) and \(n\in\R\). As such, results are valid in the case of a Riemannian manifold, which constitutes a generalization to what was proved in~\cite{bgs,18nguyen}.
  \medskip
  
  \noindent\textbf{Keywords:}  Curvature-dimension condition, Poincar\'e inequality, Beckner inequalities, Entropy flows.
\end{abstract}


\section{Introduction}
\label{sec-1}

The family of the Beckner inequalities interpolate between the Poincar\'e and the logarithmic Sobolev inequalities. For instance, let $d\mu=e^{-V}dx$ be a probability measure on $\R^d$, where $V:\R^d\to \R$ is a smooth function satisfying $\nabla^2 V\geq \rho\Id$ for some $\rho>0$. Then, Bakry-\'Emery's curvature-dimension condition implies that, for any $p\in(1,2]$, and any nonnegative smooth function $f$, 
\begin{equation}
  \label{eq-1}
  \frac{p}{p-1}\sbra{\int f^2d\mu-\left(\int f^{2/p}d\mu\right)^{\!\!p}}\leq
  \frac{2}{\rho}
  \int \abs{\nabla f}^2d\mu,
\end{equation}
these results can be found in~\cite[Sec.~7.6.2]{bgl-book}.
When $p=2$, that is the usual  Poincar\'e inequality for the measure $\mu$  and when $p\rightarrow 1$ the inequality becomes the logarithmic Sobolev inequality,
$$
\int f^2\log\frac{f^2}{\int f^2d\mu}d\mu\leq \frac{2}{\rho}
\int |\nabla f|^2d\mu,
$$
both optimal when $\mu$ is the standard Gaussian measure. 

Similar inequalities have first been proved by Bidaut-V\'eron-V\'eron in~\cite{bidaut-veron} for the sphere, using the method proposed in~\cite{gidas-spruck}. In this article, we refer to them as Beckner inequalities, in reference to~\cite{beckner89}, where Beckner proves inequallity~\eqref{eq-1} for the Gaussian measure. They are sometimes called convex inequalities. 

These inequalities play a major role among functional inequalities for probability measures, being useful to understand both asymptotic behaviour of parabolic equations, and also the geometry of measured spaces.  It is interesting to notice that for the Gaussian measure, although the Poincar\'e and the logarithmic Sobolev inequalities are optimal and have nonconstant extremal functions, whenever $p\in\pare{1,2}$, Beckner inequalities do not have extremal functions, and even admit various improvements (see for instance~\cite{arnolddolbeault05,bolley-gentil,dekl2}).

Other attractive examples appear for measures which satisfy a (weighted) Poincar\'e inequality but no logarithmic Sobolev inequality. More precisely, let $\varphi:\R^d\mapsto\R_+^*$ be a smooth and positive function such that for any $\beta>d/2$, $\int \varphi^{-\beta}dx<+\infty$. Now, define
\begin{equation}
  d\mu_\beta={c_\beta}{\vphi^{-\beta}}dx,
\end{equation}
where $c_\beta$ is a normalization constant such that $\mu_\beta$ is a probability measure on $\R^d$.
In~\cite{bgs}, the authors prove that, when $\varphi=1+|x|^2$ then for any $\beta\geq d+1$ and $p\in[p^*,2]$.  
\begin{equation}
  \label{eq-2}
  \frac{p}{p-1}\left[\int f^2d\mu_\beta-\left(\int f^{2/p}d\mu_\beta\right)^{\!\!p}\right]\leq 
  \frac{1}{\beta-1}
  \int |\nabla f|^2\varphi d\mu_\beta,
\end{equation}
where $p^*=1+1/(\beta-d)$. This inequality is rich enough to be equivalent to the Sobolev inequality on the sphere.  This result has then been extended by N'Guyen in~\cite{18nguyen}, where the author proves similar inequalities in $\R^d$ with a function $\varphi$ satisfying the convex assumption $\nabla^2 \varphi\geq c\Id$ for some constant \(c>0\). The only difference with inequality \eqref{eq-2} is that the constant $(\beta-1)^{-1}$, in front of the right hand side, becomes $2(c(\beta-1))^{-1}$, which is consistant with previous results, since \(c=2\) for \(\vphi:x\mapsto1+\abs{x}^2\). In that regard, N'Guyen's result is more general, but there exists a limitation: the range of the parameter \(p\) for which the inequality remains valid is strictly smaller.

\medskip

We would like to extend Beckner's inequality 
more precisely inequality \eqref{eq-2} in the general context of curvature-dimension conditions. The goal is twofold. First, we extend some results of~\cite{bgs,18nguyen} in the context of a Riemannian manifold. Then, under a curvature-dimension condition $CD(\rho,n)$ with $n$ \emph{negative}, we also prove Beckner inequalities. We recover for instance the weighted Poincar\'e  inequality  for generalized Cauchy distributions.

More precisely, we prove functional inequalities under two kinds of assumptions,  the curvature-dimension conditions $CD(0,n)$ with $n>0$ or $CD(\rho,n)$ with $\rho>0$ and $n\in\R$. Let us give here a flavour of our results.
\begin{itemize}
\item  In Theorem~\ref{th:weighted_poincare}, we prove the following Poincar\'e inequality under $CD(0,n)$, $n>0$. Let $(M,g)$ be a smooth $d$-dimensional Riemannian manifold with a nonnegative Ricci curvature and let $\vphi$ be a positive function such that $\Hess(\vphi)\geq c g$, $c>0$. Then for any function $f$ and $\beta\geq d+1$, 
\begin{equation}
  \label{eq-19}
  \Var_{\mu_{\vphi,\beta}}(f)\leq\frac{1}{c(\beta-1)}\int \Gamma(f)\vphi d\mu_{\vphi,\beta},
\end{equation}
where $\mu_{\vphi,\beta}=Z_{\vphi,\beta}\vphi^{-\beta}dx$ is a probability measure. This inequality generalizes previous results on the subject. 
\item  In Theorem~\ref{th:beckner}, we prove a family of Beckner inequalities under $CD(\rho,n)$ with $\rho>0$ and $n\in\R$.  On a $d$-dimensional Riemannian manifold, for any nonnegative function $f$, 
$$    
\frac{p}{p-1}\pare{\int f^2d\mu-\pare{\int f^{2/p}d\mu}^{\!\!p} } \leq 2\frac{n-1}{\rho n}\int\Gamma (f)d\mu.
$$
\begin{itemize}
    \item for all \(p\in \left(1,2\right]\) if \(n\geq d\),
    \item for all \(p\in \left[p^*,2\right]\) if \(n< -2\), where $p^*=1+\frac{1-4n}{2n^2+1}$.
 \end{itemize}
In this context, $\mu$ is the reversible measure and $\Gamma$ is the carr\'e du champ operator. 
The use of a negative dimension is new, up to our knowledge. 
\end{itemize}
\medskip

The article is organized as follows. In the next section, we state various definitions useful for the rest of the paper. In Section~\ref{sec-3} we prove weighted Beckner inequalities, like inequality~\eqref{eq-19} under $CD(0,n)$ conditions, $n>0$. The $\Phi$-entropy inequalities are also studied in this context. Section~\ref{se:CD(rho,n)} is devoted to Beckner's inequality under the curvature-dimension condition $CD(\rho,n)$ conditions with $\rho>0$ and $n\in\R$. Finally in Section~\ref{sec-5}, we apply our methods in the one dimensional case, giving another way to prove optimal weighted Poincar\'e inequality on $\R$. 

\bigskip

\noindent{\bf Acknowledgements:}
This research was supported  by the French ANR-17-CE40-0030 EFI project. The authors warmly thank L. Dupaigne for discussing the problem and proofreading a draft version of this work.


\section{Settings and definitions}\label{sec-2}

Consider a connected, \(C^\infty\) Riemannian manifold \((M,g)\) of dimension \(d\) and the Laplace-Beltrami operator \(\Delta_g\) given by, in a local chart
\[
\Delta_gf=\frac{1}{\sqrt{\abs g}}\partial_i\pare{\sqrt{\abs g}g^{ij}\partial_j f},
\]
where \(g^{ij}\) are the components of the inverse metric tensor, \(g^{-1}\). In this formula and in what follows, the Einstein notation was used, where the summation on indices is implied. 

On this manifold, define the symmetric diffusion operator \(L=\Delta_g+\Gamma^{\Delta_g}(V,.)\), where \(\Gamma^{\Delta_g}\) is the carré du champ operator (the definition of which is recalled below) associated to \(\Delta_g\), and \(V\) is a \(C^\infty\) function. Most of the notions and results related to this operator \(\Gamma\) can be found in the quite thorough \cite{bgl-book}. 
\begin{defi}[Carré du champ operator]
  Given a differential operator \(L\) on a smooth manifold \(M\), the carré du champ operator \(\Gamma^L\) is a symmetric bilinear map from \(C^\infty(M)\times C^\infty(M)\) onto \(C^\infty(M)\). It is defined by
  \[
  \Gamma^L(a,b)=\frac{1}{2}\pare{L(ab)-aLb-bLa}.
  \]
\end{defi}
An important example to keep in mind is the manifold  \((\R^d,I_d)\), on which the Laplace-Beltrami operator is the usual Laplacian, and its carré du champ operator is, for any smooth functions \(a\) and \(b\),
\[
\Gamma^\Delta(a,b)=\nabla a\cdot\nabla b.
\]
More generally, respectively using the Levici-Vita connection \(\nabla\), and in a local chart, operator \(\Gamma^{\Delta_g}\) is given by
\[
\Gamma^{\Delta_g}(a,b)=g(\nabla a,\nabla b)=g^{ij} \partial_i a \partial_j b.
\]
We iterate this definition to get the second order operator, \(\Gamma_2\).
\begin{defi}[Iterated carré du champ operator]
  Given a differential operator \(L\) on a smooth manifold \(M\), the iterated carré du champ operator \(\Gamma_2^L\) is a symmetric bilinear map from \(C^\infty(M)\times C^\infty(M)\) onto \(C^\infty(M)\) defined by
  \[
  \Gamma_2^L(a,b)=\frac{1}{2}\pare{L(\Gamma^L(a,b))-\Gamma^L(a,Lb)-\Gamma^L(b,La)}.
  \]
\end{defi}
We point out that if \(L=\Delta_g+\Gamma^{\Delta_g}(V,.)\), then \(\Gamma^L=\Gamma^{\Delta_g}\): the drift part of \(L\), namely \(V\), does not appear in this operator. For this reason, and for readability's sake, we shall simply use the notation \(\Gamma\) instead of \(\Gamma^{\Delta_g}\) in what follows, as long as it is not ambiguous. We will also write \(\Gamma(a,a)=\Gamma(a)\) for brievety. We will do the same for \(\Gamma_2\), but one should keep in mind that the drift \emph{does} play a role in \(\Gamma_2\), and \(\Gamma_2^L\not=\Gamma_2^{\Delta_g}\).
The idea behind the carré du champ operator is that it contains all the information about the geometry of the space \((M,g)\), and it thus proves worthwhile to stick to its use in the (sometimes heavy) calculations. Generally, a formula that is valid for the standard Laplacian on \(\R^d\) involving only the \(\Gamma\) operator will remain valid on more general manifolds. For instance, if \(a,b\), and \(c\) are smooth functions defined on \(\R^d\), the Hessian of \(a\) may be expressed in the following fashion
\[
\Hess(a)(\nabla b,\nabla c)=\frac{1}{2}\pare{\nabla b\cdot\nabla(\nabla a\cdot\nabla c)+\nabla c\cdot\nabla(\nabla a\cdot\nabla b)-\nabla a\cdot\nabla(\nabla b\cdot\nabla c)},
\]
hence the following lemma
\begin{lemm}[Hessian of a function]
  Let \(a,b,c\) be smooth functions on  \((M,g)\). Then, the Hessian of \(a\) is given by
  \[
  \nabla^2 a(\nabla b,\nabla c)=\frac{1}{2}\pare{\Gamma(b,\Gamma(a,c))+\Gamma(c,\Gamma(a,b))-\Gamma(a,\Gamma(b,c))},
  \]
  where \(\Gamma\) is the carré du champ operator associated to the Laplace-Beltrami operator \(\Delta_g\).
\end{lemm}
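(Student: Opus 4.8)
The plan is to reduce the stated identity on the manifold to the corresponding Euclidean identity recalled just above the lemma. Both sides of the claimed equality are intrinsic: the left-hand side is the tensor \(\nabla^2 a\) evaluated on the vector fields \(\nabla b\) and \(\nabla c\), while the right-hand side is assembled solely from the carré du champ operator \(\Gamma\), which we have already noted is independent of the choice of local chart. It therefore suffices to verify the equality pointwise, at an arbitrary fixed point \(x_0\in M\), and we are free to compute in whichever coordinate system is most convenient near \(x_0\).

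First I would fix \(x_0\) and work in normal (geodesic) coordinates centred at \(x_0\), so that \(g_{ij}(x_0)=\delta_{ij}\) and all first derivatives of the metric vanish there, \(\partial_k g_{ij}(x_0)=0\). Consequently the Christoffel symbols satisfy \(\Gamma^k_{ij}(x_0)=0\), and from \(\partial_k g^{pq}=-g^{pr}g^{qs}\partial_k g_{rs}\) one also gets \(\partial_k g^{pq}(x_0)=0\). In such coordinates the Hessian loses its connection term at the base point: since \((\nabla^2 a)_{ij}=\partial_i\partial_j a-\Gamma^k_{ij}\partial_k a\), we obtain \((\nabla^2 a)_{ij}(x_0)=\partial_i\partial_j a(x_0)\), and hence the left-hand side equals \(\sum_{i,j}\partial_i\partial_j a\,\partial_i b\,\partial_j c\) evaluated at \(x_0\).

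Next I would expand the right-hand side at \(x_0\). Writing \(\Gamma(f,h)=g^{pq}\partial_p f\,\partial_q h\) and differentiating once, the only potentially troublesome contributions are those in which a derivative falls on a factor \(g^{pq}\); but these carry a factor \(\partial_k g^{pq}(x_0)=0\) and therefore drop out at \(x_0\). What remains is exactly the flat expression \(\frac{1}{2}(\nabla b\cdot\nabla(\nabla a\cdot\nabla c)+\nabla c\cdot\nabla(\nabla a\cdot\nabla b)-\nabla a\cdot\nabla(\nabla b\cdot\nabla c))\), evaluated at \(x_0\) with the Euclidean inner product. This is precisely the Euclidean identity recalled immediately before the lemma, whose verification is a short application of the product rule: after collecting terms, the two copies of \(\partial_i\partial_j a\) combine (using the symmetry of second derivatives) into \(\sum_{i,j}\partial_i\partial_j a\,\partial_i b\,\partial_j c\), while the terms containing second derivatives of \(b\) cancel against one another, as do those containing second derivatives of \(c\). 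Since this matches the value of the left-hand side computed above, the identity holds at \(x_0\); as \(x_0\) was arbitrary, it holds everywhere.

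The computation is entirely elementary once the right framework is chosen, so the only genuine subtlety is organizational: one must justify passing to coordinates in which \(\partial_k g_{ij}(x_0)=0\), so that both the connection term of the Hessian and every metric-derivative term on the right vanish simultaneously at the base point. Alternatively, one may avoid normal coordinates and carry out the expansion in a general chart, in which case the surviving \(\partial g\) terms must be shown to reassemble into \(-\Gamma^k_{ij}\partial_k a\) by means of the formula \(\Gamma^k_{ij}=\frac{1}{2}g^{kl}(\partial_i g_{jl}+\partial_j g_{il}-\partial_l g_{ij})\); the normal-coordinate reduction is cleaner precisely because it bypasses this index bookkeeping altogether.
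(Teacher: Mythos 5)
Your proof is correct, but it takes a genuinely different route from the paper's. The paper first polarizes, reducing the claim to the case \(b=c\), and then computes intrinsically with the Levi-Civita connection: it writes \(\nabla^2a(\nabla b,\nabla b)=g(\nabla_{\nabla b}\nabla a,\nabla b)\), uses metric compatibility to move the derivative onto \(g(\nabla a,\nabla b)\), and absorbs the leftover term \(g(\nabla a,\nabla_{\nabla b}\nabla b)\) into \(\frac{1}{2}\Gamma(a,\Gamma(b))\) via the symmetry of the Hessian of \(b\). You instead note that both sides are chart-independent scalars, evaluate them at an arbitrary point in normal coordinates where \(\partial_k g_{ij}\) and \(\Gamma^k_{ij}\) vanish, and so reduce the identity to the flat Euclidean one recalled just before the lemma, which is a product-rule computation (and your cancellation pattern checks out: the two \(\partial^2a\) terms combine, while the \(\partial^2b\) and \(\partial^2c\) terms cancel in pairs after relabelling indices). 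Your approach buys complete elementarity --- nothing beyond the coordinate expression of the Hessian and the product rule is needed --- and it substantiates the paper's informal remark that a \(\Gamma\)-only formula valid for the flat Laplacian persists on manifolds; the cost is the normal-coordinate setup. The paper's computation is shorter and coordinate-free, but presupposes fluency with \(\nabla\): its last step silently uses \(g(\nabla a,\nabla_{\nabla b}\nabla b)=\frac{1}{2}\,\nabla_{\nabla a}\,g(\nabla b,\nabla b)\), which rests on torsion-freeness --- the same fact your Christoffel-symbol formula encodes explicitly. Both arguments are complete.
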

\begin{proof}
  Since the Hessian is symmetric and bilinear, it is sufficient to prove that
  \[
  \nabla^2 a(\nabla b,\nabla b)=\Gamma(b,\Gamma(a,b))-\frac{1}{2}\Gamma(a,\Gamma(b)).
  \]
  By definition,
  \begin{align*}
    \Hess(a)(\nabla b,\nabla b)&= g(\nabla_{\nabla b}\nabla a,\nabla b)\\
    &=\nabla_{\nabla b}g(\nabla a,\nabla b) - g(\nabla a,\nabla_{\nabla b}\nabla b)\\
    &=g(\nabla b,\nabla g(\nabla a,\nabla b))-g(\nabla a, g(\nabla\nabla b,\nabla b))\\
    &=g(\nabla b,\nabla g(\nabla a,\nabla b))-\frac{1}{2}g(\nabla a, \nabla g(\nabla b,\nabla b))
  \end{align*}
  which is the claimed formula.
\end{proof}

In the development of this article, we shall assume so-called curvature-dimension conditions on the different diffusion operators:
\begin{defi}[Curvature-dimension conditions $CD(\rho,n)$]
  A diffusion operator \(L\) is said to satisfy a \(CD(\rho,n)\) condition for \(\rho\in\R\) and \(n \neq 0\), if for every smooth function \(f\),
  \begin{equation}\label{eq:CD(rho,n)}
    \Gamma_2(f)\geq\rho\Gamma(f)+\frac{1}{n}(Lf)^2.
  \end{equation}
\end{defi}
For example, the Bochner-Lichnerowicz formula implies that the Laplace-Beltrami operator $\Delta_g$ satisfies the $CD(\rho,n)$ condition with $n\geq d$  and $\rho\in\R$ whenever the Ricci curvature is uniformly bounded form below by $\rho g$, \cite[Sec.~C.6]{bgl-book}.

Denoting by \(dx\) the Riemannian measure associated to \((M,g)\), we define \(\mu\) to be the reversible measure associated to \(L\), i.e. \(d\mu=Z_Ve^{-V}dx\), where \(Z_V\) a normalizing constant such that \(\mu\) is a probability measure if finite, and \(Z_V=1\) otherwise. The triple $(M,\Gamma,\mu)$ is a Markov triple, as defined in~\cite[Sec.~3.2]{bgl-book}. Then, for any smooth functions \(a,b\) such that the integrals are well defined, we have the following integration by parts formula:
\[
\int_M \Gamma(a,b)d\mu = -\int_M aLbd\mu = -\int_M bLad\mu,
\]
a notable consequence of which is that \(\int_M Lad\mu=0\).
\begin{rmrk}\label{rem-1}
We could have chosen to study a general symmetric Markov semigroup, for instance a  generic Markovian triple \((E,\Gamma,\mu)\) as proposed in~\cite{bgl-book}. However, assuming \(L\) is a symmetric diffusion operator in a smooth Riemannian manifold, which is the case of interest for us, there exists a metric \(\tilde g\) and a function \(V\) such that the operator can be rewritten \(L=\Delta_{\tilde g}+\Gamma^{\Delta_{\tilde g}}(V,.)\), so we are not, in fact, losing any generality.
\end{rmrk}


\section{Weighted inequalities under nonnegative Ricci curvature}\label{sec-3}

For a more pedestrian approach, we first explain the case of the Poincar\'e inequality, only to tackle more general inequalities later on.

\subsection{Weighted Poincaré inequality}\label{sec-3.1}

Let \(\vphi\) be a \(C^2\) positive function on \(M\) such that
\begin{equation}\label{eq:phi_convex}
  \nabla^2\vphi\geq cg
\end{equation}
for some positive constant \(c\). For \(\beta\in\R\) such that \(\vphi^{-\beta}\) is integrable with respect to \(\mu\), let \(\mu_{\vphi,\beta}=Z_{\vphi,\beta}\vphi^{-\beta}\mu\), with the constant \(Z_{\vphi,\beta}\) such that this new measure is a probability measure. The main result is the following:
\begin{thrm}[Weighted Poincaré inequality]\label{th:weighted_poincare}
  Assume that the diffusion operator \(L\) satisfies a \(CD(0,n)\) condition with \(n\geq d\), and fix a real number \(\beta\geq n+1\). Then for all smooth bounded functions $f$,
  \begin{equation}
  \label{eq-18}
    \Var_{\mu_{\vphi,\beta}}(f)=
  \int f^2d\mu_{\vphi,\beta}-\pare{\int fd\mu_{\vphi,\beta}}^{\!\!2}\leq\frac{1}{c(\beta-1)}\int \Gamma(f)\vphi d\mu_{\vphi,\beta}.
  \end{equation}
\end{thrm}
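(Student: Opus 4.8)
The plan is to reduce the statement to an ordinary (unweighted) Poincaré inequality for a well-chosen auxiliary diffusion operator, and then to run the Bakry--Émery semigroup argument for that operator. Concretely, I would introduce the weighted operator
\[
\hat L f = \vphi Lf - (\beta-1)\Gamma(\vphi,f),
\]
and first check two facts. That $\hat L$ is symmetric with respect to $\mu_{\vphi,\beta}$: writing the Dirichlet form as $\int\Gamma(f,g)\vphi^{1-\beta}d\mu$ and applying the integration by parts formula of Section~\ref{sec-2} (after a single Leibniz split $\vphi^{1-\beta}\Gamma(f,g)=\Gamma(f,g\vphi^{1-\beta})-g\Gamma(f,\vphi^{1-\beta})$) pins down the drift coefficient to be exactly $-(\beta-1)$. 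And that, since the first-order part $\Gamma(\vphi,\cdot)$ does not affect the carré du champ, the carré du champ of $\hat L$ is $\hat\Gamma(f)=\vphi\Gamma(f)$. With these identifications the right-hand side of~\eqref{eq-18} is exactly $\frac{1}{c(\beta-1)}\int\hat\Gamma(f)\,d\mu_{\vphi,\beta}$, so the claim is precisely the Poincaré inequality for the Markov triple $(M,\hat\Gamma,\mu_{\vphi,\beta})$ with spectral gap $c(\beta-1)$.

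By the standard variance-flow argument, this Poincaré inequality follows once a curvature bound of the form $\hat\Gamma_2(f)\geq c(\beta-1)\hat\Gamma(f)$ is established, that is, a $CD(c(\beta-1),\infty)$ condition for $\hat L$: differentiating $t\mapsto\Var_{\mu_{\vphi,\beta}}(\hat P_t f)$ and using $\int\hat\Gamma_2\,d\mu_{\vphi,\beta}\geq c(\beta-1)\int\hat\Gamma\,d\mu_{\vphi,\beta}$ forces exponential decay of the Dirichlet energy, and integrating in time returns the inequality with the announced constant. Everything is thus reduced to a lower bound on $\hat\Gamma_2$.

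The heart of the matter is therefore to compute $\hat\Gamma_2(f)=\frac12\hat L(\hat\Gamma(f))-\hat\Gamma(f,\hat Lf)$. Expanding with the Leibniz rules for $L$ and $\Gamma$, and eliminating the genuinely third-order contributions through the Hessian identity of the preceding lemma (which rewrites $\Gamma(f,\Gamma(\vphi,f))$ as $\Hess(\vphi)(\nabla f,\nabla f)+\frac12\Gamma(\vphi,\Gamma(f))$), one finds that $\hat\Gamma_2(f)$ equals $\vphi^2\Gamma_2(f)+(\beta-1)\vphi\,\Hess(\vphi)(\nabla f,\nabla f)$ plus remainder terms involving $Lf$, $\Gamma(\vphi,f)$, $L\vphi$ and $\Gamma(\vphi)$. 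The convexity assumption $\Hess(\vphi)\geq cg$ turns the second term into $\geq c(\beta-1)\hat\Gamma(f)$, which is exactly the main term we need.

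It remains to show the remainder is nonnegative, and this is where I expect the main difficulty to lie, and where the hypotheses $CD(0,n)$ and $\beta\geq n+1$ must be used together. The term $\vphi^2\Gamma_2(f)$ is bounded below using $\Gamma_2(f)\geq\frac1n(Lf)^2$, but the remainder also contains a cross term $\vphi\,\Gamma(\vphi,\Gamma(f))=2\vphi\,\Hess(f)(\nabla\vphi,\nabla f)$ together with $-\vphi\,Lf\,\Gamma(\vphi,f)$, which are not controlled by $(Lf)^2$ alone. The natural route is to treat the collection of remainder terms as a quadratic form in the unknowns $Lf$ and the second derivative of $f$ in the direction $\nabla\vphi$, and to complete the square using a reinforced form of the $CD(0,n)$ inequality that retains the Cauchy--Schwarz deficit between $\frac1n Lf$ and the full Hessian of $f$; the threshold $\beta\geq n+1$ should be exactly the condition making the resulting quadratic form nonnegative. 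Alternatively, one may keep $\int\vphi\,\Gamma(\vphi,\Gamma(f))\,d\mu_{\vphi,\beta}$ under the integral and integrate by parts to trade this third-order term against further $\Gamma_2$ contributions before invoking $CD(0,n)$. In either route, verifying the precise algebra that makes $\beta\geq n+1$ the sharp threshold is the delicate step.
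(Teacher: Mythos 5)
Your setup is exactly the paper's: the conformal operator $\hat L=\vphi L-(\beta-1)\Gamma(\vphi,\cdot)$, the identification $\hat\Gamma=\vphi\Gamma$ with reversible measure $\mu_{\vphi,\beta}$, and the reduction to a Poincaré inequality for the triple $(M,\vphi\Gamma,\mu_{\vphi,\beta})$ via the variance flow are all correct and match the proof of Theorem~\ref{th:weighted_poincare}. The problem is the step you delegate to a curvature bound. Your primary route --- establishing the \emph{pointwise} condition $\hat\Gamma_2(f)\geq c(\beta-1)\hat\Gamma(f)$, i.e.\ $CD(c(\beta-1),\infty)$ for $\hat L$ --- cannot work: the remark closing Section~\ref{se:CD(rho,n)} states precisely that $\overline L$ does \emph{not} satisfy a pointwise curvature-dimension condition strong enough to produce the constant $c(\beta-1)$ (even for $\vphi(x)=1+\abs{x}^2$ on $\R^d$, no admissible pair $(\rho,n)$ gives $\rho n/(n-1)=c(\beta-1)$); the sharp constant only arises from an \emph{integrated} criterion. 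Attempting to complete the square pointwise in $Lf$ and the directional Hessian, as you suggest, will therefore not close with the announced constant.

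The ``alternative'' you mention in one sentence --- integrating by parts under $\int\hat\Gamma_2(f)\,d\bar\mu$ before invoking $CD(0,n)$ --- is the actual proof, but it is also the entire content of the argument, and you have not carried it out. Concretely, the paper's Lemma~\ref{lem-1} shows (for $\beta>2$, with $\beta=2$ handled by a limit --- a case your sketch does not flag, even though your integrations by parts against $\vphi^{2-\beta}$ degenerate there) that after rewriting the remainder terms with respect to $\mu$ and integrating by parts, everything collapses to
\[
\Lambda''(t)\geq-2c(\beta-1)\Lambda'(t)+\frac{4}{\beta-2}\int\vphi^2\sbra{(\beta-1)\Gamma_2(f_t)-(Lf_t)^2}d\bar\mu,
\]
at which point the plain inequality $\Gamma_2\geq\frac1n(Lf)^2$ from $CD(0,n)$ makes the last integral nonnegative exactly when $\beta-1\geq n$; no ``reinforced'' form of $CD(0,n)$ retaining a Cauchy--Schwarz deficit is needed or used. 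Since you never produce this identity, you never actually see where $\beta\geq n+1$ enters, which you yourself identify as the delicate step. As written, the proposal is a correct reduction plus an unproved (and, in its primary formulation, false) key estimate, so it does not constitute a proof.
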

\begin{rmrk}
    As explained in Section~\ref{sec-2}, the main example to keep in mind is the Laplace-Beltrami operator on a Riemannian manifold with a nonnegative Ricci curvature, in which case $\mu_{\vphi,\beta}=Z_{\vphi,\beta}\vphi^{-\beta}dx$. 
    
    This inequality happens to be optimal whenever \((M,g)=(\R^d,\Id)\), and \(\vphi(x)=1+\abs{x}^2\), where the optimal constant is reached for projectors $x\mapsto x_i$, $1\leq i\leq d$. This optimal case has been proved in~\cite{blanchet2007} and also in~\cite{nguyen-14,abj} with  different methods. 
\end{rmrk}
\begin{proof}
Fix \(\beta\in\R\backslash\{2\}\), and define
  \[
  \overline L\coloneqq\vphi L-(\beta-1)\Gamma(\vphi,.).
  \]
 Note that the operator \(\overline L\) is, in fact, of the form \(\overline L=\Delta_{\overline g}+\Gamma^{\Delta_{\overline g}}(\overline V,.)\), where \(\overline g=\vphi^{-1} g\) and \(\overline V=V+(d/2-\beta)\log\vphi\), so the operator we are considering here is obtained from the first one through a conformal transformation. In what follows, everything written with an overline relates to objects associated to the operator \(\overline L\) or the manifold \((M,\overline g)\). For instance, the carré du champ operator is given by \(\overline\Gamma=\vphi\Gamma\), its reversible measure is \(\bar\mu=\mu_{\vphi,\beta}\), and 
  \begin{multline}\label{eq:gamma2}
    \overline\Gamma_2(f)=\Gamma_2^{\ol L}(f)= \vphi^2\Gamma_2(f)+(\beta-1)\varphi\nabla^2\vphi(\nabla f,\nabla f)+\frac{\Gamma(f)}{2}\pare{\vphi L\vphi-(\beta-1)\Gamma(\varphi)}\\+\varphi\Gamma(\varphi,\Gamma(f))-\varphi Lf\Gamma(f,\varphi),
  \end{multline}
  a proof of which can be stringed together with information from \cite[Sec.~6.9.2]{bgl-book}, for instance. 
 
   Now, fix \(f\), a smooth and \emph{bounded} function on \(M\), and consider the Markov semigroup \((f_t)_{t\geq0}\), solution of the initial-value system
  \begin{equation}\label{eq:evolution}
    \begin{cases}
      \partial_t f_t= \overline L f_t &\text{on } (0,+\infty)\times M, \\
      f_0=f &\text{on } M.
    \end{cases}
  \end{equation}
  Consider the variance of \(f_t\) along the flow: 
  \[
  \Lambda(t)\coloneqq \Var_{\bar\mu}(f_t)=\int f_t^2d\bar\mu-\pare{\int f_td\bar\mu}^{\!2}=\int f_t^2d\bar\mu-\pare{\int fd\bar\mu}^{\!2},
  \]
  because \(\ol L\) is mass-preserving. Then, we use the following estimate:
  \begin{lemm}
  \label{lem-1}
    If \(\beta>1\) and \(\beta\neq2\), then for all \(t\geq0\), 
    \begin{equation}\label{eq:entropy1}
      \Lambda''(t)\geq-2c(\beta-1)\Lambda'(t)+\frac{4}{\beta-2}\int\vphi^2\sbra{(\beta-1)\Gamma_2(f_t)-(Lf_t)^2)}d\bar\mu.
    \end{equation}
    Furthermore, there is equality in \eqref{eq:entropy1} for all \(t\geq0\) whenever the Hessian of \(\vphi\) is a constant, i.e. when inequality \eqref{eq:phi_convex} is an equality.  
  \end{lemm}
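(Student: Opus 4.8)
The plan is to run the Bakry--\'Emery $\Gamma_2$ calculus, but carried out for the conformally changed operator $\ol L$ and its reversible measure $\bar\mu=\mu_{\vphi,\beta}$, and then to translate the result back into the unbarred quantities $\Gamma_2(f_t)$ and $(Lf_t)^2$ by means of formula~\eqref{eq:gamma2}. First I would differentiate $\Lambda$ along the flow~\eqref{eq:evolution}. Since $\bar\mu$ is reversible for $\ol L$, integration by parts gives $\Lambda'(t)=2\int f_t\ol Lf_t\,d\bar\mu=-2\int\ol\Gamma(f_t)\,d\bar\mu=-2\int\vphi\Gamma(f_t)\,d\bar\mu$. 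Differentiating once more, using $\ol\Gamma(f_t,\ol Lf_t)=\tfrac12\ol L\ol\Gamma(f_t)-\ol\Gamma_2(f_t)$ and $\int\ol L(\cdot)\,d\bar\mu=0$, one obtains $\Lambda''(t)=-4\int\ol\Gamma(f_t,\ol Lf_t)\,d\bar\mu=4\int\ol\Gamma_2(f_t)\,d\bar\mu$. This reduces the whole lemma to an integrated lower bound for $\int\ol\Gamma_2(f_t)\,d\bar\mu$.

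Next I would substitute the explicit expression~\eqref{eq:gamma2}. The Hessian term is treated by the convexity hypothesis~\eqref{eq:phi_convex}: since $\vphi>0$ and $\beta>1$, the pointwise bound $\nabla^2\vphi(\nabla f_t,\nabla f_t)\ge c\,\Gamma(f_t)$ yields $4(\beta-1)\int\vphi\,\nabla^2\vphi(\nabla f_t,\nabla f_t)\,d\bar\mu\ge 4c(\beta-1)\int\vphi\Gamma(f_t)\,d\bar\mu=-2c(\beta-1)\Lambda'(t)$, which is exactly the first term on the right-hand side of~\eqref{eq:entropy1}, with equality precisely when $\nabla^2\vphi=cg$. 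It then remains to show that the three remaining terms of $4\int\ol\Gamma_2\,d\bar\mu$ contribute exactly $\frac{4}{\beta-2}\int\vphi^2\bigl[(\beta-1)\Gamma_2(f_t)-(Lf_t)^2\bigr]\,d\bar\mu$. Writing $f$ for $f_t$, $P=\int\vphi^2\Gamma_2(f)\,d\bar\mu$, $Q=\int\vphi^2(Lf)^2\,d\bar\mu$, and letting $R$ denote the sum of those three terms, this amounts to the \emph{exact} identity $(\beta-2)R=P-Q$.

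Establishing $(\beta-2)R=P-Q$ is the crux, and it is pure $\Gamma$-calculus against the weighted measure $\bar\mu=Z_{\vphi,\beta}\vphi^{-\beta}\mu$. I would use two integration-by-parts ingredients. The first is a weighted Bochner identity: for any weight $\psi$, $\int\psi\,\Gamma_2(f)\,d\mu=\tfrac12\int\Gamma(f)L\psi\,d\mu+\int\psi(Lf)^2\,d\mu+\int Lf\,\Gamma(\psi,f)\,d\mu$, which applied with $\psi=\vphi^{2-\beta}$ immediately expresses $P-Q$ through $\int\Gamma(f)\,L(\vphi^{2-\beta})\,d\mu$ and $\int Lf\,\Gamma(\vphi^{2-\beta},f)\,d\mu$, evaluated via the chain rules $L(\vphi^{2-\beta})$ and $\Gamma(\vphi^{2-\beta},f)$. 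The second is the identity $\int\vphi\,\Gamma(\vphi,\Gamma(f))\,d\bar\mu=-\int\vphi\,\Gamma(f)\,L\vphi\,d\bar\mu+(\beta-1)\int\Gamma(f)\,\Gamma(\vphi)\,d\bar\mu$, obtained by absorbing the weight into the first slot, $\vphi^{1-\beta}\Gamma(\vphi,\cdot)=\Gamma\bigl(\tfrac{\vphi^{2-\beta}}{2-\beta},\cdot\bigr)$, and then integrating by parts so that $L$ lands on $\vphi^{2-\beta}$.

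The main obstacle is exactly this last point of bookkeeping: one must integrate by parts in the direction that lets $L$ act on powers of $\vphi$ rather than on $\Gamma(f)$, since the latter would reintroduce an $L\Gamma(f)$, hence a spurious $\Gamma_2(f)$ term, and leave the computation circular. With the two ingredients above, the $\vphi L\vphi$ and $\Gamma(\vphi)$ contributions match termwise, the cross terms $\int\vphi Lf\,\Gamma(\vphi,f)\,d\bar\mu$ cancel, and $(\beta-2)R=P-Q$ follows; combined with the convexity step this gives~\eqref{eq:entropy1} together with the stated equality case. Note that the hypotheses $\beta>1$ (controlling the sign of the Hessian term) and $\beta\neq2$ (allowing division by $\beta-2$) are precisely what this computation demands.
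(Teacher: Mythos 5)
Your proposal is correct and follows essentially the same route as the paper: the same semigroup interpolation giving \(\Lambda''=4\int\ol\Gamma_2(f_t)\,d\bar\mu\), the same expansion~\eqref{eq:gamma2} with the Hessian term handled via~\eqref{eq:phi_convex}, and the same integrations by parts against \(\mu\) through the power \(\vphi^{2-\beta}\); you merely run the bookkeeping in the opposite direction, expanding \(P-Q\) toward the three remaining terms rather than transforming those terms toward \(\int\vphi^2L\Gamma(f_t)\,d\bar\mu\) and \(\int\vphi^2\Gamma(f_t,Lf_t)\,d\bar\mu\). (In particular, the step you flag as potentially circular --- letting \(L\) land on \(\Gamma(f_t)\) --- is exactly what the paper does, harmlessly, since the resulting \(L\Gamma(f_t)\) recombines with \(\Gamma(f_t,Lf_t)\) into the \(\Gamma_2(f_t)\) wanted on the right-hand side of~\eqref{eq:entropy1}.)
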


  Assume that \(L\) satisfies the \(CD(0,n)\) condition for some \(n>0\), and that \(\beta-1\geq n\), and \(\beta>2\). Then, we deduce from equation \eqref{eq:entropy1} that
  \[
  \Lambda''(t)\geq-2c(\beta-1)\Lambda'(t)
  \]
  which we can integrate once betwen $0$ and $t$ to find that
  \[
  -\Lambda'(t)\leq -\Lambda'(0)e^{-2c(\beta-1)t}
  \]
  and then once again, between \(t=0\) and \(t=+\infty\),
  \[
  \Lambda(0)-\lim_{t\to+\infty}\Lambda(t)\leq \frac{-1}{2c(\beta-1)} \Lambda'(0).
  \]
The Markov semigroup studied is ergodic, in other words, it ensures the convergence of \(f_t\) towards its mean in $L^2(\bar\mu)$, 
so that \(\lim_{t\to+\infty}\Lambda(t)=0\), and the previous inequality is simply the stated result, for  smooth and bounded functions, and for $\beta>2$. The general case is established by approximation. The particular case $\beta=2$ (also implying $n=d=1$) is proved by letting $\beta\rightarrow2$ directly in inequality~\eqref{eq-18}.
\end{proof}

\begin{proof}[Proof of Lemma~\ref{lem-1}]
  By definition of \(\bar\mu\), we may integrate by parts the derivative of \(\Lambda\) to find
 $$
  \Lambda'(t)= 2\int f_t\ol Lf_td\bar\mu=-2\int\overline\Gamma(f_t)d\bar\mu,
 $$
  and, differentiating once again, 
$$
    \Lambda''(t)=-4\int\ol\Gamma(f_t,\ol Lf_t)d\bar\mu=-4\int\ol\Gamma(f_t,\ol Lf_t)d\bar\mu+2\int \ol L(\ol\Gamma(f_t))=4\int\overline\Gamma_2(f_t)d\bar\mu. 
 $$
  We may now use formula \eqref{eq:gamma2} to find that
  \begin{multline}\label{eq:lambda''}
    \Lambda''(t)= 4\int(\beta-1)\varphi\nabla^2\vphi(\nabla f_t,\nabla f_t)d\bar\mu\\+4\int \sbra{\vphi^2\Gamma_2(f_t)+\frac{\Gamma(f_t)}{2}\pare{\vphi L\vphi-(\beta-1)\Gamma(\varphi)}+\varphi\Gamma(\varphi,\Gamma(f_t))-\varphi Lf_t\Gamma(f_t,\varphi)}d\bar\mu.
  \end{multline}
  First, the convexity assumption \eqref{eq:phi_convex} on \(\vphi\)  yields
  \[
  4\int(\beta-1)\varphi\nabla^2\vphi(\nabla f_t,\nabla f_t)\geq 4c(\beta-1)\int\overline\Gamma(f_t)d\bar\mu=-2c(\beta-1)\Lambda'(t).
  \]
  Now, in the second integral of equation \eqref{eq:lambda''}, we may not directly use the integration by parts formula, because \(\bar\mu\) is the invariant measure for \(\ol\Gamma\), and not for \(\Gamma\). We must thus rewrite it in terms of \(\mu\). First,
\begin{multline*}
    \int\frac{\Gamma(f_t)}{2}\vphi L\vphi d\bar\mu=Z_{\vphi,\beta}\int\frac{\Gamma(f_t)}{2}\vphi^{1-\beta}L\vphi d\mu =-Z_{\vphi,\beta}\int\Gamma\pare{\vphi,\vphi^{1-\beta}\frac{\Gamma(f_t)}{2}}d\mu \\
    =-\int\frac{\vphi}{2}\Gamma(\vphi,\Gamma(f_t))d\bar\mu-\frac{1-\beta}{2}\int\Gamma(f_t)\Gamma(\vphi)d\bar\mu.
\end{multline*} 
  Then,
\begin{multline*}
    \int\varphi\Gamma(\varphi,\Gamma(f_t))d\bar\mu= Z_{\vphi,\beta}\int\vphi^{1-\beta}\Gamma(\vphi,\Gamma(f_t))d\mu=\frac{Z_{\vphi,\beta}}{2-\beta}\int\Gamma(\vphi^{2-\beta},\Gamma(f_t))d\mu\\
    =\frac{1}{\beta-2}\int\vphi^2L\Gamma(f_t)d\bar\mu,
\end{multline*} 
 and likewise,
\begin{multline*}
    -\int\varphi Lf_t\Gamma(f_t,\varphi)d\bar\mu=\frac{Z_{\vphi,\beta}}{\beta-2}\int Lf_t\Gamma(f_t,\vphi^{2-\beta})d\mu=\\
    \frac{Z_{\vphi,\beta}}{\beta-2}\int \sbra{\Gamma(f_t,\vphi^{2-\beta}Lf_t)- \vphi^{2-\beta}\Gamma(f_t,Lf_t)}d\mu
    =\frac{-1}{\beta-2}\int\vphi^2\sbra{(Lf_t)^2+\Gamma(f_t,Lf_t)}d\bar\mu.
\end{multline*} 
  We conclude putting these three identities together.
\end{proof}


\subsection{\texorpdfstring{\boldmath$\Phi$}{Phi}-entropy and weighted Beckner inequalities}

Instead of the variance, we may consider a generic \(\Phi\)-entropy along the flow. Choose a strictly convex real function \(\Phi\in C^4(I)\), (where $I\subset\R_+^*$ is an open interval) and consider the $\Phi$-entropy of any function $f:M\mapsto I$ such that the integrals below are well defined
\[
\Ent^\Phi_{\mu_{\vphi,\beta}}(f)\coloneqq \int\Phi(f)d\mu_{\vphi,\beta} -\Phi\pare{\int fd\mu_{\vphi,\beta}},
\]
so that \(\Var_{\mu_{\vphi,\beta}}=\Ent^{x\mapsto x^2}_{\mu_{\vphi,\beta}}\). Generalizations of inequalities like \eqref{eq-18} to $\Phi$-entropies have already been studied under $CD(\rho,\infty)$ condition, $\rho>0$ in~\cite{chafai,bolley-gentil}. We find a generalized version of Theorem \ref{th:weighted_poincare}:
\begin{thrm}[\(\Phi\)-entropy inequalities]\label{th:phi_entropy}
  Assume that the diffusion operator \(L\) satisfies a \(CD(0,n)\) condition with \(n\geq0\), and fix a real number \(\beta\geq n+1\).
Let \(\Phi:I\mapsto \R\) be a strictly convex function such that
  \begin{equation}\label{eq:Phi_cond}
    \Phi^{(4)}\Phi''\geq C_{n,\beta}(\Phi^{(3)})^2,
  \end{equation}
  with
  \[
  C_{n,\beta}=\frac{8(\beta-1-n)(2\beta-1)+9n}{8(\beta-1-n)(\beta-1)}.
  \]
  Then, for all smooth bounded functions \(f:M\mapsto  I\), there holds
  \begin{equation}
  \label{eq-17}
  \Ent^\Phi_{\bar\mu}(f)\leq\frac{1}{2c(\beta-1)}\int \Phi''(f)\Gamma(f) \vphi d\bar\mu.
  \end{equation}
\end{thrm}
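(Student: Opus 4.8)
The plan is to run the very same entropy-flow argument as in Theorem~\ref{th:weighted_poincare}, merely replacing the variance by the $\Phi$-entropy along the identical flow, and then to reduce the whole question to the pointwise nonnegativity of a quadratic form, this last step being where condition~\eqref{eq:Phi_cond} and the constant $C_{n,\beta}$ enter. Concretely, I keep the conformal operator $\ol L=\vphi L-(\beta-1)\Gamma(\vphi,\cdot)$, its carré du champ $\ol\Gamma=\vphi\Gamma$, its reversible measure $\bar\mu=\mu_{\vphi,\beta}$, and the flow $(f_t)$ solving~\eqref{eq:evolution}. By the maximum principle, $f_t$ takes its values in (a compact subinterval of) $I$ for all $t$, so $\Lambda(t):=\Ent^\Phi_{\bar\mu}(f_t)$ is well defined, and ergodicity gives $\lim_{t\to+\infty}\Lambda(t)=0$. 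As in Theorem~\ref{th:weighted_poincare}, inequality~\eqref{eq-17} will follow once we establish the differential inequality $\Lambda''(t)\geq-2c(\beta-1)\Lambda'(t)$ and integrate it twice between $0$ and $+\infty$.

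Next I would compute the two derivatives. Since $\ol L$ is mass-preserving, integration by parts gives $\Lambda'(t)=\int\Phi'(f_t)\ol Lf_t\,d\bar\mu=-\int\Phi''(f_t)\ol\Gamma(f_t)\,d\bar\mu\leq0$. Differentiating once more and integrating by parts twice, using the definition of $\ol\Gamma_2$, yields the standard $\Phi$-entropy identity
\[
\Lambda''(t)=2\int\Phi''(f_t)\ol\Gamma_2(f_t)\,d\bar\mu+2\int\Phi^{(3)}(f_t)\ol\Gamma(f_t,\ol\Gamma(f_t))\,d\bar\mu+\int\Phi^{(4)}(f_t)\ol\Gamma(f_t)^2\,d\bar\mu,
\]
which for $\Phi(x)=x^2$ recovers $\Lambda''=4\int\ol\Gamma_2(f_t)\,d\bar\mu$ from the proof of Lemma~\ref{lem-1}. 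I would then substitute the conformal expression~\eqref{eq:gamma2} for $\ol\Gamma_2(f_t)$ and rewrite every integral against the base measure $\mu$, exactly as in the proof of Lemma~\ref{lem-1}, but now carrying the weight $\Phi''(f_t)$ through each integration by parts; this weight produces additional $\Phi^{(3)}(f_t)$ contributions that must be tracked and combined with the two explicit higher-order terms above. The piece coming from $(\beta-1)\vphi\nabla^2\vphi(\nabla f_t,\nabla f_t)$ is bounded below, via $\nabla^2\vphi\geq cg$, by $c(\beta-1)\ol\Gamma(f_t)$, and after multiplication by $2\Phi''(f_t)$ and integration it produces precisely the drift term $-2c(\beta-1)\Lambda'(t)$.

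The hard part will be to show that the remainder $R(t):=\Lambda''(t)+2c(\beta-1)\Lambda'(t)$ is nonnegative. Invoking the $CD(0,n)$ condition to replace $\Gamma_2(f_t)$ by $\tfrac1n(Lf_t)^2$ and using $\beta-1\geq n$ to fix the right coefficient, $R(t)$ becomes the integral of a pointwise expression that is quadratic in the scalar quantities $Lf_t$, $\Gamma(f_t,\vphi)$ and a $\Gamma(f_t,\Gamma(f_t))$-type term, with coefficients built from $\Phi''(f_t),\Phi^{(3)}(f_t),\Phi^{(4)}(f_t)$ and the geometric data. The crux is to organize this into a sum of squares: grouping the $\Phi^{(4)}$ term with the cross $\Phi^{(3)}$ terms and completing the square reduces nonnegativity of $R(t)$ to a single discriminant inequality, whose threshold is exactly the stated value
\[
C_{n,\beta}=\frac{8(\beta-1-n)(2\beta-1)+9n}{8(\beta-1-n)(\beta-1)}.
\]
I expect this completion-of-square bookkeeping — tracking every term generated by the conformal substitution and the extra integrations by parts, and then extracting the optimal constant — to be the genuinely laborious obstacle. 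Once $R(t)\geq0$ is in hand, integrating $\Lambda''\geq-2c(\beta-1)\Lambda'$ twice and using $\Lambda(+\infty)=0$ gives~\eqref{eq-17}; the degenerate cases $\beta=n+1$ (where $C_{n,\beta}$ blows up) and $\beta=2$ are recovered by a continuity/approximation argument, just as at the end of the proof of Theorem~\ref{th:weighted_poincare}.
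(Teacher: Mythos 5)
Your proposal is correct and follows essentially the same route as the paper: the entropy flow for \(\ol L\), the second-derivative formula for the \(\Phi\)-entropy (your identity in terms of \(f_t\) is equivalent, via the change-of-variables formula for \(\ol\Gamma_2\), to the paper's formula written in terms of \(h=\Phi'(f_t)\)), the conformal expansion \eqref{eq:gamma2} plus integration by parts to eliminate all derivatives of \(\vphi\), the convexity of \(\vphi\) producing the drift \(-2c(\beta-1)\Lambda'\), and finally \(CD(0,n)\) together with the discriminant condition on a pointwise quadratic in \(Lh\), which is exactly where \(C_{n,\beta}\) arises. Like the paper, you leave the bookkeeping implicit; the only loose point is your description of the residual quadratic as involving \(\Gamma(f_t,\vphi)\) and \(\Gamma(f_t,\Gamma(f_t))\) — after the integrations by parts these terms are gone and the quadratic is in the single variable \(Lh\) with coefficients \(\Gamma(h)\), \(\Gamma(h)^2\), which is what makes the single-discriminant reduction legitimate.
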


\begin{proof} Let us first assume that $\beta>2$, the case $\beta=2$ can be proved by passing to the limit in~\eqref{eq-17}. Assume also that $f$ is a smooth and bounded function, the general case can be proved by approximations.
  Consider, just like before, the function \(f_t\), solution of the initial-value system  \eqref{eq:evolution} starting from $f$, and define, for \(t\geq0\),
  \[
  \Lambda(t)=\Ent^\Phi_{\bar\mu}(f_t)=\int\Phi(f_t)d\bar\mu-\Phi\pare{\int f_td\bar\mu}=\int\Phi(f_t)d\bar\mu-\Phi\pare{\int fd\bar\mu},
  \]
  where, again, \(\bar\mu=\mu_{\vphi,\beta}\). Differentiating the entropy yields
  \[
  \Lambda'(t)=-\int\frac{\ol\Gamma(\Phi'(f_t))}{\Phi''(f_t)}d\bar\mu,
  \]
and, from~\cite[Lem.~4]{bolley-gentil},
  \[
  \Lambda''(t)=\int\sbra{2\frac{\ol\Gamma_2( \Phi'(f_t))}{\Phi''(f_t)}+\pare{\frac{-1}{\Phi''}}''(f_t)\pare{\frac{\ol\Gamma(\Phi'(f_t))}{\Phi''(f_t)}}^2}d\bar\mu.
  \]
  We follow exactly the same steps as in Section~\ref{sec-3.1}. For brievety, we write \(h\coloneqq\Phi'(f_t)\). First, expanding \(\ol\Gamma_2\) in terms of \(\Gamma_2\), \(\Gamma\) and \(L\), and also using the convexity hypothesis on \(\vphi\) \eqref{eq:phi_convex}, we find, 
  \begin{multline*}
    \Lambda''(t)\geq-2c(\beta-1)\Lambda'(t)+\int\left[\frac{2}{\Phi''(f_t)}\left(\vphantom{\pare{\frac{\Gamma(h)}{\Phi''(f)}}^2} \vphi^2\Gamma_2(h)+\frac{\Gamma(h)}{2}(\vphi L\vphi-(\beta-1)\Gamma(\vphi))+\vphi\Gamma(\vphi,\Gamma(h))\right.\right.\\
      \left.\left.\vphantom{\frac{\Gamma(h)}{2}}-\vphi Lh\Gamma(h,\vphi)\right)  +\vphi^2\pare{\frac{-1}{\Phi''}}''(f_t)\pare{\frac{\Gamma(h)}{\Phi''(f_t)}}^2\right]d\bar\mu.
  \end{multline*}
  We may now give these terms the same treatment as in the previous section: the goal is to remove all derivatives on \(\vphi\). The calculations are not made explicit here; they involve the exact same ingredients we used before, only with more terms appearing. We finally find, using only integration by parts, that 
  \[
  \Lambda''(t)\geq-2c(\beta-1)\Lambda'(t)+\frac{1}{\beta-2}\int\frac{\vphi^2}{\Phi''(f_t)}\pare{a_0\Gamma_2(h)+a_0'(Lh)^2+a_2\Gamma(h)Lh+a_3\Gamma(h)^2}d\bar\mu,
  \]
  where
  \[
  \left\{\begin{aligned}
  a_0&=2(\beta-1),\\
  a_0'&=-2,\\
  a_2&=3\frac{\Phi'''(f_t)}{(\Phi''(f_t))^2},\\
  a_3&=(\beta-1)\frac{\Phi^{(4)}(f_t)}{(\Phi''(f_t))^3}+(1-2\beta)\frac{(\Phi'''(f_t))^2}{(\Phi''(f_t))^4}.
  \end{aligned}\right.
  \]
  Just like for the Poincaré inequality proof, this inequality becomes an equality when \(\Hess(\vphi)=cg\). Invoke the \(CD(0,n)\) condition to find that 
  \begin{equation}\label{eq:pol_ineq}
    \Lambda''(t)\geq-2c(\beta-1)\Lambda'(t)+\frac{1}{\beta-2}\int\frac{\vphi^2}{\Phi''(f)}\pare{a_1(Lh)^2+a_2\Gamma(h)Lh+a_3\Gamma(h)^2}d\bar\mu,
  \end{equation}
  where \(a_1=a_0/n+a_0'=\frac{2}{n}(\beta-1-n)\). In the same way as before, we want this integrated quantity to be nonnegative. Since \(\Phi\) is strictly convex, it is sufficient to require the polynomial function \(X\mapsto a_1X^2+a_2X+a_3\) to be nonnegative, which itself is equivalent to
  \[
  a_1\geq 0 \quad\text{and}\quad a_2^2-4a_1a_3\leq0.
  \]
  Straightforward computation yields that this, in turn, is equivalent, whenever \(n\geq0\), to
  \[
  \beta\geq n+1\quad\text{and}\quad \Phi^{(4)}\Phi''\geq C_{\beta,n}(\Phi''')^2,
  \]
  where
  \[
  C_{\beta,n}=\frac{8(\beta-1-n)(2\beta-1)+9n}{8(\beta-1-n)(\beta-1)}.
  \]
  If this condition is satisfied, the integrand is pointwise nonnegative, thus
  \[
  \Lambda''(t)\geq-2c(\beta-1)\Lambda'(t),
  \]
  and integrating this twice yields the theorem.
\end{proof}

We can explicit this theorem in the particular case where \(\Phi(X)=X^p\), \(p> 1\). Condition \eqref{eq:Phi_cond} is then equivalent to
\[
p\in[p^*,2],
\]
where
\begin{equation}\label{eq:pstar}
  p^*=1+\frac{8(\beta-1-n)+9n}{8\beta(\beta-1-n)+9n}\in(1,2].
\end{equation}
Thus, if this condition is satisfied,
\[
\int f^p d\bar\mu-\pare{\int fd\bar\mu}^p\leq\frac{p(p-1)}{2c(\beta-1)}\int f^{p-2}\vphi\Gamma(f) d\bar\mu.
\]
Rewriting this inequality with \(\tilde f= f^{2/p}\) in place
of \(f\), we find
\begin{coro}[Weighted Beckner inequalities]\label{th:weighted_beckner}
Under the same assumptions as Theorem~\ref{th:phi_entropy}, for all \(p\in\sbra{p^*,2}\), and all smooth bounded functions $f$, we have
  \begin{equation}\label{eq:weighted_beckner}
    \frac{p}{p-1}\sbra{\int f^2d{\mu_{\vphi,\beta}}-\pare{\int f^{2/p}d{\mu_{\vphi,\beta}}}^p } \leq \frac{2}{c(\beta-1)}\int\vphi\Gamma (f)d{\mu_{\vphi,\beta}},
  \end{equation}
  where \(p^*\) is given by \eqref{eq:pstar}.
\end{coro}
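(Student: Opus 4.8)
The plan is to derive this corollary directly from Theorem~\ref{th:phi_entropy} by specializing the convex function to the power $\Phi(X)=X^p$, which is indeed strictly convex on $I=\R_+^*$ for $p>1$, and then performing a change of unknown function. First I would compute $\Phi''(X)=p(p-1)X^{p-2}$, $\Phi'''(X)=p(p-1)(p-2)X^{p-3}$, and $\Phi^{(4)}(X)=p(p-1)(p-2)(p-3)X^{p-4}$, and plug them into condition~\eqref{eq:Phi_cond}. Every power of $X$ and the common factor $p^2(p-1)^2$ cancels, reducing \eqref{eq:Phi_cond} to the purely scalar inequality $(p-2)(p-3)\geq C_{n,\beta}(p-2)^2$.

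Next I would solve this. For $p\in(1,2)$ one has $p-2<0$, so dividing by $(p-2)$ reverses the inequality and leaves $\frac{p-3}{p-2}\geq C_{n,\beta}$. Writing $\frac{p-3}{p-2}=1-\frac{1}{p-2}$, this map is strictly increasing on $(1,2)$, running from $2$ (as $p\to1$) to $+\infty$ (as $p\to2$); hence the condition is equivalent to $p\geq p^*$, where $p^*$ is the unique root of $\frac{p^*-3}{p^*-2}=C_{n,\beta}$. A short algebraic check—most cleanly done by setting $u=\beta-1-n\geq0$ and expressing both $C_{n,\beta}$ and the candidate value in terms of $u$ and $n$—confirms that this root is exactly the expression~\eqref{eq:pstar}, and that $p^*\in(1,2]$ under the standing hypotheses $n\geq0$ and $\beta\geq n+1$. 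This is the one step that requires genuine care: the sign of $p-2$ when dividing, and matching the solved root against~\eqref{eq:pstar}.

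It then remains to make the inequality explicit and change variables. Applying~\eqref{eq-17} to $\Phi(X)=X^p$, with $\Phi''(f)=p(p-1)f^{p-2}$, gives
\[
\int f^p\,d\bar\mu-\pare{\int f\,d\bar\mu}^{\!p}\leq\frac{p(p-1)}{2c(\beta-1)}\int f^{p-2}\vphi\,\Gamma(f)\,d\bar\mu.
\]
I would then replace $f$ by $f^{2/p}$. On the left, $(f^{2/p})^p=f^2$, producing the two terms of the stated variance-type difference. On the right, the key simplification is the exact cancellation of the powers of $f$: since $\Gamma(f^{2/p})=\frac{4}{p^2}f^{4/p-2}\Gamma(f)$, one gets $(f^{2/p})^{p-2}\,\Gamma(f^{2/p})=\frac{4}{p^2}\Gamma(f)$, so the right-hand side collapses to a constant multiple of $\int\vphi\,\Gamma(f)\,d\bar\mu$. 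Multiplying through by $\frac{p}{p-1}$ turns the prefactor $\frac{p(p-1)}{2c(\beta-1)}\cdot\frac{4}{p^2}$ into $\frac{2}{c(\beta-1)}$, yielding exactly~\eqref{eq:weighted_beckner}. Everything here is routine bookkeeping once Theorem~\ref{th:phi_entropy} is available; the substitution and exponent cancellation are the only arithmetic to watch.
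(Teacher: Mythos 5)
Your proposal is correct and follows exactly the paper's route: specialize Theorem~\ref{th:phi_entropy} to \(\Phi(X)=X^p\), check that condition \eqref{eq:Phi_cond} reduces to \(p\in[p^*,2]\) with \(p^*\) as in \eqref{eq:pstar}, and then substitute \(f^{2/p}\) for \(f\), with the powers of \(f\) cancelling on the right-hand side. The sign analysis when dividing by \(p-2\) and the algebraic identification of the root with \eqref{eq:pstar} both check out.
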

Corollary \ref{th:weighted_beckner} is optimal (and thus so is theorem \ref{th:phi_entropy}) in the sense that there exists no constant  \(0<C<(c(\beta-1))^{-1}\) such that \(\ol L\) satisfies a Beckner inequality \(B_p(C)\) (see Definition \ref{bpc} below), because \(B_p(C)\) (for any \(p\in(1,2)\)) implies the Poincaré inequality with constant \(C\). Indeed, testing inequality \eqref{eq:weighted_beckner} with the function  \(1+\eps f \), $f$ bounded, we find that
\[
\frac{p}{p-1}\sbra{\int (1+\eps f)^2d{\bar\mu}-\pare{\int (1+\eps f)^{2/p}d{\bar\mu}}^p } \leq \frac{2\eps^2}{c(\beta-1)}\int\vphi\Gamma (f)d{\bar\mu},
\]
which, after being expanded when \(\eps\) is small, turns into
\[
\int f^2d\bar\mu - \pare{\int fd\bar\mu}^2+o(1)\leq\frac{1}{c(\beta-1)}\int \vphi\Gamma(f)d\bar\mu,
\]
which is exactly the optimal weighted Poincaré inequality \ref{th:weighted_poincare}.

\begin{rmrk}\label{rk:refined_ineq}
  In the case of interest, when \(\Phi(X)=X^p\), we may explicit inequality \eqref{eq:pol_ineq}
  \[
  \Lambda''(t)\geq-2c(\beta-1)\Lambda'(t)+\frac{1}{\beta-2}\int\vphi^2h^{\frac{2-p}{p-1}}\pare{a'_1(Lh)^2+a'_2\frac{\Gamma(h)}{h}Lh+a'_3\frac{\Gamma(h)^2}{h^2}}d\bar\mu,
  \]
  where the \(a_i'\), \(i\in\{1,2,3\}\) are real constants.
  The inequality can be then improved using the fact that
  \[
  a'_1X^2+a'_2X+a'_3 = a'_1\pare{X+\frac{a'_2}{2a'_1}}^2-\frac{(a'_2)^2-4a'_1a'_3}{4a'_1},
  \]
  so that, whenever \(\delta\coloneqq \frac{-1}{4a'_1}((a'_2)^2-4'a_1a'_3) \geq 0\), we find that
  \begin{align*}
    \Lambda''(t)&\geq-2c(\beta-1)\Lambda'(t)+\frac{\delta}{\beta-2}\int\vphi^2h^{\frac{2-p}{p-1}}\frac{\Gamma(h)^2}{h^2}d\bar\mu\\
    &\geq -2c(\beta-1)\Lambda'(t)+\frac{\delta}{\beta-2}\frac{\pare{\int\vphi h^{\frac{2-p}{p-1}}\Gamma(h)d\bar\mu}^2}{\int h^{\frac{p}{p-1}}d\bar\mu}\\
    &= -2c(\beta-1)\Lambda'(t)+ C_p\frac{\Lambda'(t)^2}{\Lambda(t)},
  \end{align*}
  by Jensen's inequality, where, for reference,
  \[
  C_p=\frac{\delta}{\beta-2}(p-1)^2p^{\frac{2-p}{p-1}}\geq 0.
  \]
  This leads, when integrated, to a refined version of Beckner's inequality, which we will come back to in section \ref{se:CD(rho,n)}, and specifically, corollary \ref{th:improved_beckner}.
\end{rmrk}
\begin{rmrk}
As explained in Section~\ref{sec-1}, we extend the result of Nguyen~\cite{18nguyen} in two aspects. First, Beckner 
inequalities~\eqref{eq:weighted_beckner} are proved in the more general 
context of Riemannian manifolds satisfying a $CD(0,n)$ condition, and secondly, the range of parameter $p\in[p^*,2]$ given by~\eqref{eq:pstar} contains strictly the one proposed in~\cite{18nguyen}.

On the other hand, in \cite{bgs}, corollary \ref{th:weighted_beckner} is proved in the special case of \(M=\R^d\) and \(\vphi(x)=1+\abs{x}^2\). Interestingly, the range found in their article is greater than what we can manage here. Indeed, it is valid for all \(p\in\sbra{p^*_{BGS},2}\), where
  \[
  1<p^*_{BGS}=1+\frac{1}{\beta-d} < p^*.
  \]
  It might be worth it to note that in our case, we used the fact that the second degree polynomial appearing in the integral is greater than \(0\), when this is in fact quite a gross lower bound. Indeed, the argument is that
  \[
  \int\vphi^2h^{\frac{2-p}{p-1}}\pare{\frac{hLh}{\Gamma (h)}+\frac{a'_2}{2a'_1}}^{\mkern-5mu 2}\frac{\Gamma(h)^2}{h^2}d\bar\mu\geq 0,
  \]
  but the squared term can probably be controlled in a way that leads to a wider range of \(p\), since
  \[
  \int \sbra{hLh+\frac{a'_2}{2a'_1}\Gamma(h)}d\mu=\pare{\frac{a'_2}{2a'_1}-1}\int\Gamma(h)d\mu<0
  \]
  whenever \(h\not\equiv0\).
\end{rmrk}


\section{Spaces with positive curvature and real dimension}\label{se:CD(rho,n)}

The family of inequalities considered in this article, especially for sections~\ref{se:CD(rho,n)} and~\ref{sec-5}, is the following interpolation between the Poincaré inequality and the logarithmic Sobolev inequality
\begin{defi}[Beckner inequalities]
\label{def-1}
  The Markov triple \((M,\Gamma,\mu)\) is said to satisfy a Beckner inequality \(B_p(C)\) with parameter \(p\in\left(1,2\right]\) and constant \(C>0\) if, for all nonnegative smooth bounded functions \(f\),
    \begin{equation}\label{eq:beckner}
      \frac{p}{p-1}\pare{\int f^2d\mu-\pare{\int f^{2/p}d\mu}^{\!\!p} } \leq 2C\int\Gamma (f)d\mu.
    \end{equation}
\end{defi}
The constants in front of the integrals are chosen so that for \(p=2\), this is exactly the Poincaré inequality with constant \(C\), and the limiting case \(p\to 1\) corresponds to the logarithmic Sobolev inequality, again with constant \(C\). Indeed,
\[
\lim_{p\to1}\frac{p}{p-1}\pare{\int f^2d\mu-\pare{\int f^{2/p}d\mu}^{\!\!p} }=\int f^2\log\pare{\frac{f^2}{\int f^2d\mu}}d\mu.
\]

\begin{rmrk}
    The weighted Beckner inequalities proved in the previous section can be seen as classical Beckner inequalities. Indeed, theorem \ref{th:weighted_beckner} states that \(B_p((c(\beta-1))^{-1})\) is valid for the triple \((M,\vphi\Gamma,\mu_{\vphi,\beta})\).
\end{rmrk}

In this section, we consider a diffusion operator \(L\) defined on \((M,g)\), juste like in Section~\ref{sec-3}, with $\mu$ being its reversible measure. We assume that $L$ satisfies a curvature-dimension condition $CD(\rho,n)$,  with $\rho>0$ and $n\in\R$.
That means, specifically but not exclusively, that we will consider negative \(n\). Such spaces, sometimes referred to as having negative \emph{effective dimension}, have been studied in the past, with articles dating as far back as 2003, up to our knowledge \cite{scheffer,milman,milman18,bgs}. Furthermore, one can easily construct examples of such operators with~\cite[Cor. 4.13]{ohta}.
Writing down the \(CD(\rho,n)\) inequality in good local coordinates like in \cite{bgs}, one can check that a necessary criterion for the inequality to be true is that \(d/n\leq 1\), which means that we are actually restricted to \(n\in\R\backslash\sbra{0,d}\). The main difference, compared to the previous section, is that the curvature is \emph{positive}, which is a stronger assumption, but \(n\) can be negative, which is a weaker assumption. 

Let us first recall the well known result for the Poincar\'e inequality, the proof of which is given in~\cite[Thm.~4.8.4]{bgl-book}.
\begin{thrm}[Poincar\'e inequality under $CD(\rho,n)$]\label{th:poincare}
Assume that the diffusion operator \(L\) satisfies a \(CD(\rho,n)\) condition, with \(\rho>0\) and \(n\in\R\backslash[0,d)\). Then the following Poincar\'e inequality holds, 
\begin{equation}
\label{eq-poin2}
\Var_{\mu}(f)\leq\frac{n-1}{\rho n}\int \Gamma(f)d\mu.
\end{equation}
\end{thrm}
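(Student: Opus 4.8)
The plan is to run the same entropy-flow argument used for Theorem~\ref{th:weighted_poincare}, but now directly for the operator \(L\) and its own reversible measure \(\mu\), which removes the conformal weight \(\vphi\) and considerably simplifies the bookkeeping. I would fix a smooth bounded \(f\), let \((f_t)_{t\ge0}\) solve \(\partial_t f_t=Lf_t\) with \(f_0=f\), and set \(\Lambda(t)\coloneqq\Var_\mu(f_t)\). Since \(L\) is mass-preserving, \(\int f_t\,d\mu\) is constant, and integrating by parts against the reversible measure gives \(\Lambda'(t)=-2\int\Gamma(f_t)\,d\mu\) and \(\Lambda''(t)=4\int\Gamma_2(f_t)\,d\mu\), exactly as in the proof of Lemma~\ref{lem-1}.

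The crucial simplification, absent in the weighted setting, is the identity \(\int\Gamma_2(f_t)\,d\mu=\int (Lf_t)^2\,d\mu\), which follows from the integration-by-parts formula together with \(\int L\pare{\Gamma(f_t)}\,d\mu=0\). Feeding this into the integrated form of the \(CD(\rho,n)\) condition~\eqref{eq:CD(rho,n)}, namely \(\int\Gamma_2(f_t)\,d\mu\ge\rho\int\Gamma(f_t)\,d\mu+\tfrac1n\int(Lf_t)^2\,d\mu\), I would obtain \(\tfrac{n-1}{n}\int(Lf_t)^2\,d\mu\ge\rho\int\Gamma(f_t)\,d\mu\). Here the hypothesis \(n\in\R\backslash[0,d)\) (with \(d\ge1\)) forces \(n\) and \(n-1\) to share the same sign, so \(\tfrac{n-1}{n}>0\) and dividing preserves the inequality, yielding \(\int(Lf_t)^2\,d\mu\ge\tfrac{\rho n}{n-1}\int\Gamma(f_t)\,d\mu\). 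Consequently \(\Lambda''(t)\ge-\tfrac{2\rho n}{n-1}\Lambda'(t)\).

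From here the conclusion is reached just as in Lemma~\ref{lem-1}: integrating once between \(0\) and \(t\) gives \(-\Lambda'(t)\le-\Lambda'(0)e^{-\frac{2\rho n}{n-1}t}\), and integrating again between \(t=0\) and \(t=+\infty\) gives \(\Lambda(0)-\lim_{t\to\infty}\Lambda(t)\le-\tfrac{n-1}{2\rho n}\Lambda'(0)\). Invoking ergodicity of the semigroup to get \(\lim_{t\to\infty}\Lambda(t)=0\), and using \(-\Lambda'(0)=2\int\Gamma(f)\,d\mu\), this is precisely the claimed inequality~\eqref{eq-poin2}; the general case follows by approximation. This is, of course, the content of~\cite[Thm.~4.8.4]{bgl-book}, recast in the flow language used throughout the paper.

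The main obstacle is not the computation but the two places where the sign of \(n\) intervenes. First, one must check carefully that \(\tfrac{n-1}{n}>0\) over the whole admissible range \(n\in\R\backslash[0,d)\), so that both the division step and the final constant \(\tfrac{n-1}{\rho n}\) come out positive; this is where the restriction away from \([0,d)\) is genuinely used, and the degenerate boundary (e.g.\ \(n=1\) when \(d=1\)) should be treated separately or excluded. Second, and more delicate, is justifying the convergence \(\lim_{t\to\infty}\Lambda(t)=0\) and the integrability of the flow when \(n<0\), since then \(\mu\) may be heavy-tailed; the point is that \(\rho>0\) already secures a spectral gap through the general semigroup theory, so the ergodicity used here is legitimate rather than circular once one argues by density from smooth bounded functions.
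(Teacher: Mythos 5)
Your proof is correct and follows the same entropy-flow strategy the paper uses throughout; the key step --- integrating the \(CD(\rho,n)\) condition and combining it with \(\int\Gamma_2(f)\,d\mu=\int(Lf)^2\,d\mu\) to obtain \(\int(Lf)^2\,d\mu\ge\frac{\rho n}{n-1}\int\Gamma(f)\,d\mu\) --- is exactly the standard argument that the paper itself outsources to \cite[Thm.~4.8.4]{bgl-book}. Your caveats are handled at the same level of rigor as the paper's own proofs: at the boundary case \(n=1\) (possible only when \(d=1\)) the derived inequality forces \(\Gamma(f)=0\), so the degenerate claim \(\Var_\mu(f)\le 0\) still holds, and the appeal to ergodicity is the same one the paper makes in the proof of Theorem~\ref{th:weighted_poincare}.
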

We extend this result to Beckner inequalities in the following theorem:
\begin{thrm}[Beckner inequalities under $CD(\rho,n)$]\label{th:beckner}
  Assume that the diffusion operator \(L\) satisfies a \(CD(\rho,n)\) condition, with \(\rho>0\) and \(n\in\R\backslash[-2,d)\). Then, \(B_p\pare{\frac{n-1}{\rho n}}\) is satisfied
    \begin{itemize}
    \item for all \(p\in \left(1,2\right]\) if \(n\geq d\),
    \item for all \(p\in \left[p^*,2\right]\) if \(n< -2\), where
      \[
      p^*=1+\frac{1-4n}{2n^2+1}.
      \]
    \end{itemize}
\end{thrm}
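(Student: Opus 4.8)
The plan is to reproduce the entropy-flow scheme of Section~\ref{sec-3}, now directly for the operator \(L\) and its reversible measure \(\mu\) (no conformal weight is needed), and to read off the admissible range of \(p\) from the sign of a quadratic form, exactly as in the proof of Theorem~\ref{th:phi_entropy}. Concretely, I would fix \(\Phi(X)=X^p\), run the semigroup \((f_t)\) solving \(\partial_t f_t=Lf_t\) with \(f_0=f\), and study \(\Lambda(t)=\Ent^\Phi_\mu(f_t)\). Writing \(h=\Phi'(f_t)\), one has \(\Lambda'(t)=-\int\frac{\Gamma(h)}{\Phi''(f_t)}\,d\mu\le0\), and the Bolley--Gentil formula \cite[Lem.~4]{bolley-gentil} gives \(\Lambda''(t)=\int\sbra{\frac{2\Gamma_2(h)}{\Phi''(f_t)}+\pare{\frac{-1}{\Phi''}}''(f_t)\frac{\Gamma(h)^2}{\Phi''(f_t)^2}}d\mu\). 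The goal is the linear differential inequality \(\Lambda''\ge-2\frac{\rho n}{n-1}\Lambda'\); once it is established, integrating twice and invoking ergodicity (\(\Lambda(+\infty)=0\), as in Theorem~\ref{th:poincare}) yields \(\Lambda(0)\le\frac{n-1}{2\rho n}\pare{-\Lambda'(0)}\), which is the \(\Phi\)-entropy inequality with constant \(\frac{n-1}{2\rho n}\); the substitution \(\tilde f=f^{2/p}\) carried out before Corollary~\ref{th:weighted_beckner} then converts it into \(B_p\pare{\frac{n-1}{\rho n}}\).

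The heart of the argument is to produce this differential inequality from the curvature-dimension bound \eqref{eq:CD(rho,n)}, namely \(\Gamma_2(h)\ge\rho\Gamma(h)+\frac1n(Lh)^2\). Inserting it, the term \(\rho\Gamma(h)\) contributes \(-2\rho\Lambda'\); the improvement of the constant from \(\frac1\rho\) to \(\frac{n-1}{\rho n}\) must come from the dimensional term \(\frac2n\int\frac{(Lh)^2}{\Phi''}\,d\mu\), which I would treat exactly as in the Poincaré proof of Theorem~\ref{th:poincare}: integrating by parts to relate \(\int\frac{(Lh)^2}{\Phi''}\) back to \(\int\frac{\Gamma_2(h)}{\Phi''}\), hence to \(\Lambda''\) itself, and then moving the resulting multiple of \(\frac1n\Lambda''\) to the left-hand side. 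Since \(1-\frac1n>0\) in both regimes \(n\ge d\) and \(n\le-2\), this reweighting is what upgrades the factor \(\rho\) into \(K=\frac{\rho n}{n-1}\). The same integration by parts also generates, through the factor \(\Phi'''\), a cross term \(\Gamma(h)\,Lh\) and additional \(\Gamma(h)^2\) contributions, so that after collecting everything the residual integrand is a quadratic form \(a_1(Lh)^2+a_2\Gamma(h)Lh+a_3\Gamma(h)^2\) weighted by \(1/\Phi''(f_t)>0\), with coefficients depending on \(\rho\), \(n\) and \(p\). Requiring this form to be nonnegative, i.e. \(a_1\ge0\) together with \(a_2^2-4a_1a_3\le0\), is what simultaneously secures the constant and pins down the admissible \(p\): for \(n\ge d\) the condition should hold on all of \((1,2]\), whereas for \(n<-2\) it forces \(p\ge p^*\).

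The main obstacle, and the genuinely new point, is the negative-dimension case. When \(n<0\) the dimensional term \(\frac2n(Lh)^2\) enters with a \emph{negative} sign, so the leading coefficient \(a_1\) changes sign and the positivity analysis is reversed: one can no longer expect the full interval \(p\in(1,2]\), and the discriminant inequality \(a_2^2-4a_1a_3\le0\) becomes satisfiable only on a subinterval \([p^*,2]\). Carrying out this computation carefully should produce exactly the threshold \(n=-2\) (where \(p^*=2\), leaving only the Poincaré inequality of Theorem~\ref{th:poincare}) and the value \(p^*=1+\frac{1-4n}{2n^2+1}\) for \(n<-2\), the excluded band \([-2,d)\) being precisely where no nontrivial \(p\) survives. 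I expect the bookkeeping of the integration by parts---tracking every \(\Phi'''\) contribution and the exact coefficients \(a_1,a_2,a_3\)---to be the most delicate and error-prone step, whereas the limiting cases (\(p\to1\) yielding the logarithmic Sobolev inequality and \(p=2\) recovering Theorem~\ref{th:poincare}), the degenerate value making \(n-1\) small, and the reduction from smooth bounded \(f\) to general \(f\) are handled by the same limiting and approximation arguments already used in Section~\ref{sec-3}.
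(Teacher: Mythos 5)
Your framework (entropy flow for \(\Phi(X)=X^p\), a differential inequality \(\Lambda''\geq-\frac{2\rho n}{n-1}\Lambda'\), double integration) matches the paper's, but the mechanism you propose for producing that differential inequality does not close, and the paper itself points this out before introducing its actual argument. If you apply the \(CD(\rho,n)\) condition directly to \(h=\Phi'(f_t)\) and then convert \emph{all} of \(\frac1n\int h^q(Lh)^2d\mu\) back into \(\frac1n\int h^q\Gamma_2(h)d\mu\) (plus lower-order terms) so as to absorb it into \(\Lambda''\) and upgrade \(\rho\) to \(\frac{\rho n}{n-1}\), then no \((Lh)^2\) term survives: the residual integrand is of the form \(a_2\,h^{q-1}\Gamma(h)Lh+a_3\,h^{q-2}\Gamma(h)^2\) with \(a_2\neq0\) as soon as \(q=\frac{2-p}{p-1}>0\) (the cross term is generated by the weight \(h^q\), i.e.\ by \(\Phi'''\), when integrating by parts \(\int h^q(Lh)^2\) and \(\int h^{q-1}\Gamma(h,\Gamma(h))\)). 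A quadratic form with \(a_1=0\) and \(a_2\neq0\) is never nonnegative, so the discriminant analysis you plan to run has nothing to work with. Conversely, if you only convert a fraction \(\lambda<1\) of the dimensional term so as to keep a genuine \(a_1(Lh)^2\), the constant degrades to \(\frac{\rho n}{n-\lambda}<\frac{\rho n}{n-1}\), and you lose the claimed sharp constant. This is exactly the obstruction the paper describes when it notes that the naive use of the \(CD(\rho,n)\) condition gives only \(\Lambda''\geq-2\rho\Lambda'\) and ``fails whenever \(n<0\).''

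The missing idea is to apply the \(CD(\rho,n)\) inequality not to \(h\) but to a power \(\eta(h)=h^{\theta+1}\) with \(\theta\) a free parameter. Expanding \(\Gamma_2(h^{\theta+1})\geq\rho\Gamma(h^{\theta+1})+\frac1n(Lh^{\theta+1})^2\) produces, after multiplication by \(h^q\) and integration by parts, coefficients \(A(\theta)\) and \(B(\theta)\) in front of \(\int h^{q-1}\Gamma(h,\Gamma(h))d\mu\) and \(\int h^{q-2}\Gamma(h)^2d\mu\); the choice \(\theta=\frac{3q}{2(n+2)}\) kills the signless third-order term \(A\,h^{q-1}\Gamma(h,\Gamma(h))\) entirely, leaving only a \(\Gamma(h)^2\) remainder whose coefficient \(\alpha=2B+q=\frac{q}{2(n+2)^2}\pare{q(4n-1)+2n(n+2)}\) must be nonnegative --- and it is precisely this condition that yields \(p\in(1,2]\) for \(n\geq d\) and \(p\in[p^*,2]\) with \(p^*=1+\frac{1-4n}{2n^2+1}\) for \(n<-2\). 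Note also that the threshold \(n=-2\) does not emerge from a discriminant, as you anticipate, but from the singularity of the optimal \(\theta\) at \(n=-2\). Without this extra degree of freedom your computation cannot reach the stated constant and range, so the proposal as written has a genuine gap at its central step.
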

Interestingly, we find nothing  for \(n\in\left[-2,0\right)\), which corresponds to the weakest \(CD(\rho,n)\) conditions possible, even though the Poincaré inequality remains valid in that range. It seems like there is not enough structure in that case. 

\begin{proof} 
  In the same fashion as the previous section, fix \(f\), a bounded smooth nonnegative function on \(M\), and consider the function \(f_t\), solution of the initial-value system
  \begin{equation}\label{eq:evolution2}
    \begin{cases}
      \partial_t f_t= L f_t &\text{on } (0,+\infty)\times M, \\
      f_0=f &\text{on } M,
    \end{cases}
  \end{equation}
  and consider its \(\Phi\)-entropy along the flow, where \(\Phi\) is assumed to be strictly convex.
  \[
  \Lambda(t)=\Ent^\Phi_{\mu}(f_t)=\int\Phi(f_t)d\mu-\Phi\pare{\int f_td\mu},
  \]
  Invoking  once again,
  \[
  \Lambda'(t)=-\int\frac{\Gamma(\Phi'(f_t))}{\Phi''(f_t)}d\mu,\quad\Lambda''(t)=\int\sbra{2\frac{\Gamma_2( \Phi'(f_t))}{\Phi''(f_t)}+\pare{\frac{-1}{\Phi''}}''\!\!\!(f_t)\pare{\frac{\Gamma(\Phi'(f_t))}{\Phi''(f_t)}}^{\!\!2}}d\mu.
  \]
  Classically, one can assume \(-1/\Phi''\) to be convex, whence the \(CD(\rho,n)\) condition yields
  \[
  \Lambda''(t)\geq\int\frac{2}{\Phi''(f_t)}\sbra{\rho\Gamma(\Phi'(f_t)) + \frac{1}{n}(L\Phi'(f_t))^2}d\mu,
  \]
  and, when \(n>0\), this leads to
  \[
  \Lambda''(t)\geq -2\rho\Lambda'(t),
  \]
  which, ultimately, proves Poincaré-type inequalities, using arguments like the ones in section~\ref{sec-3}. The problem is that the constant appearing in the inequality is, in fact, not optimal, and furthermore, the argument fails whenever \(n<0\). For the sake of simplicity, we will now assume that \(\Phi(X)=X^p\), with \(p\in(1,2]\), but the argument could very well be generalized to more general \(\Phi\). The derivatives of the entropy then become
  \[
  \Lambda'(t)=-\frac{p}{p-1}\int f_t^{2-p}\Gamma(f_t^{p-1}) d\mu,\quad\Lambda''(t)=\frac{p}{p-1}\int\sbra{2f_t^{2-p}\Gamma_2( f_t^{p-1})+\frac{2-p}{p-1}f_t^{4-3p}\Gamma(f_t^{p-1})^2}d\mu.
  \]
  Rewriting these quantities with respect to \(q\coloneqq\frac{2-p}{p-1}\in\left[0,+\infty\right)\), and \(h=f_t^{p-1}\), 
  \[
  \frac{p-1}{p}\Lambda'(t)=-\int h^q\Gamma(h) d\mu,\quad\frac{p-1}{p}\Lambda''(t)=\int\sbra{2h^q\Gamma_2(h)+qh^{q-2}\Gamma(h)^2}d\mu.
  \]
  To get the most information out of this, we shall apply the \(CD(\rho,n)\) condition not to the function \(h\), but to \(\eta(h)\), where \(\eta\) is some function to be chosen later. Expanding every term in the \(CD(\rho,n)\) inequality~\eqref{eq:CD(rho,n)}
  \[
  \Gamma_2(\eta(h))\geq\rho\Gamma(\eta(h))+\frac{1}{n}(L\eta(h))^2 
  \]
  yields
  \[
  \eta'^2(h)\Gamma_2(h)+\eta'(h)\eta''(h)\Gamma(h,\Gamma(h))+(\eta''(h)\Gamma(h))^2\geq\rho\eta'^2(h)\Gamma(h)+\frac{1}{n}(\eta'(h)Lh+\eta''(h)\Gamma(h))^2.
  \]
  This inequality is, in particular, true for the power function \(\eta(x)=x^{\theta+1}\), with \(\theta\in\R\), so that
  \[
  \Gamma_2(h)\geq \underbrace{\rho\Gamma(h)}_{i}
  +2\theta\underbrace{\frac{1}{n}\frac{\Gamma(h)Lh}{h}}_{ii}
  -\theta\underbrace{\frac{\Gamma(h,\Gamma(h))}{h}}_{iii}
  +\theta^2\underbrace{\pare{\frac{1-n}{n}}\frac{\Gamma(h)^2}{h^2}}_{iv}
  +\underbrace{\frac{1}{n}(Lh)^2}_{v}.
  \]
  Multiplying this inequality by \(h^q\) and then integrating it, we are left with five terms to consider. The first term corresponds to the first derivative of the entropy, \(\Lambda'(t)\), so we may leave it as it is. Terms \(ii\) and \(iii\) are trickier, because their sign is not known, so we must take care of them. Term \(iv\) is always negative since \(n\notin\sbra{0,1}\), but it is compensated by another (positive) term of the same nature appearing naturally in \(\Lambda''(t)\). The sign of the last term depends on the sign of \(n\), so we must take care of it as well, at least for negative \(n\).

  \emph{Term} \(ii\): an integration by parts yields
  \[
  \int h^{q-1}\Gamma(h)Lhd\mu=-\int \sbra{h^{q-1}\Gamma(h,\Gamma(h))+(q-1)h^{q-2}\Gamma(h)^2}d\mu.
  \]

  \emph{Term} \(iii\): we do not do anything for now with this term, and will adjust \(\theta\) later so that it disappears.

  \emph{Term} \(v\): we use the fact that, by definition,
  \[
  \int\Gamma_2(h)d\mu=\int\sbra{\frac{1}{2}L(\Gamma(h))-\Gamma(h,Lh)}d\mu=\int(Lh)^2d\mu
  \]
  to prove that, for any real function \(\eta\),
  \[
  \int\eta'^2(h)(Lh)^2d\mu=\int\sbra{\eta'^2(h)\Gamma_2(h)+3\eta(h)\eta'(h)\Gamma(h,\Gamma(h))+2(\eta''^2(h)+\eta'(h)\eta'''(h))\Gamma(h)^2}d\mu.
  \]
  In particular,
  \[
  \int h^q(Lh)^2d\mu=\int \sbra{h^q\Gamma_2(h)+q(q-1)h^{q-2}\Gamma(h)^2+\frac{3}{2}qh^{q-1}\Gamma(h,\Gamma(h))}d\mu.
  \]
  Finally, we are left with an equality still involving the parameter \(\theta\):
  \[
  \int h^q\Gamma_2(h)d\mu\geq \frac{\rho n}{n-1}\int h^q\Gamma(h)d\mu+ \int \sbra{A h^{q-1} \Gamma(h,\Gamma(h))+Bh^{q-2}\Gamma( h)^2 }d\mu
  \]
  where
  \[
  \left\{
  \begin{aligned}
    A&=\frac{1}{n-1}\left(\frac{3q}{2}-\theta(n+2)\right),\\
    B&=\frac{q(q-1)}{n-1}-\theta^2-2\theta\frac{q-1}{n-1}.
  \end{aligned}\right.
  \]
  Choosing \(\theta\) so that \(A=0\), i.e. \(\theta=\frac{3q}{2(n+2)}\), we find that
  \[
  \int\sbra{ 2f^q\Gamma_2(h)+qh^{q-2}\Gamma(h)^2}d\mu\geq \frac{2\rho n}{n-1}\int h^q\Gamma(h)d\mu+\alpha \int h^{q-2}\Gamma(h)^2d\mu,
  \]
  with 
  \begin{align*}
    \alpha=2B+q&=\frac{q}{2(n+2)^2}(q(4n-1)+2n(n+2))\\
    &=\frac{1}{2(n+2)^2}\pare{\frac{2-p}{p-1}}\pare{\frac{2-p}{p-1}(4n-1)+2n(n+2)}
  \end{align*}
  Remembering that \(q\geq0\), this constant \(\alpha\) turns out to be nonnegative for the following range of parameters
  \[
  \begin{cases}
    n\geq d\\
    q\geq0
  \end{cases}
  \quad\quad\text{or}\quad\quad\quad
  \begin{cases}
    n< -2\\
    q\in\sbra{0,q^*}
  \end{cases}
  \text{with }q^*=\frac{2n(n+2)}{1-4n},
  \]
  or equivalently, in terms of the exponent \(p\),
  \[
  \begin{cases}
    n\geq d\\
    p\in\left(1,2\right]
  \end{cases}
  \quad\quad\text{or}\quad\quad\quad
  \begin{cases}
    n< -2\\
    p\in\sbra{p^*,2}
  \end{cases}
  \text{with }p^*=1+\frac{1-4n}{2n^2+1},
  \]
  Whenever \(\alpha\geq0\), we may, at last, compare \(\Lambda''\) to \(\Lambda'\). Indeed, we find that
  \[
  \Lambda''(t)\geq - \frac{2\rho n}{n-1}\Lambda'(t),
  \]
  which proves the claimed \(B_p\pare{\frac{n-1}{\rho n}}\) inequality when integrated twice.
\end{proof}

To prove this theorem, we used the nonnegativity of a specific term in a differential inequality, but in fact, we can do a little bit better and compare it to the other terms, in order to prove a refined version of the Beckner inequalities we are considering.
\begin{defi}\label{bpc}
  The Markov triple \((M,\Gamma,\mu)\) is said to satisfy a refined Beckner inequality \(B_p^*(C,\theta)\) with parameter \(p\in\left(1,2\right]\) and constants \(C>0\) and \(\theta\geq0\) if, whenever \(\theta\neq1\), 
    \[
    \frac{p}{p-1}\pare{\frac{1}{1-\theta}}\pare{\int f^2d\mu-\pare{\int f^{2/p}d\mu}^{\!\!(1-\theta)p}\pare{\int f^2d\mu}^{\!\!\theta}} \leq 2C\int\Gamma (f)d\mu
    \]
    for all smooth functions \(f\), and, when \(\theta=1\),
    \[
    \frac{p}{p-1}\pare{\int f^2d\mu}\log\pare{\frac{\int f^2d\mu}{\pare{\int f^{2/p}d\mu}^p}}\leq 2C\int\Gamma (f)d\mu.
    \]
\end{defi}
With this definition, \(B_p(C)\) is the same as \(B_p^*(C,0)\). The inequality given for \(\theta=1\) is simply the limit of the other inequality when \(\theta\to 1\).
This indeed corresponds to an improved version of the Beckner inequality, because for all \(x,y>0\) and \(\theta\in\R_+\backslash\{1\}\),
\[
\frac{x-y^{1-\theta}x^\theta}{1-\theta} \geq x-y,
\]
and more generally, \(B_p^*(C,\theta)\) implies \(B_p^*(C,\theta')\) for all \(\theta'\in\sbra{0,\theta}\).
Such improvements have been shown in~\cite{arnolddolbeault05,bolley-gentil} under the $CD(\rho,\infty)$ condition. The limit case, that is, for the usual entropy, is proposed in~\cite[Thm.~6.8.1]{bgl-book}.

\begin{thrm}[Improved Beckner inequalities]\label{th:improved_beckner}
  Under the same assumptions of in Theorem \ref{th:beckner}, the inequality \(B_p^*\pare{\frac{n-1}{\rho n},\theta}\) is satisfied for the same range of parameter \(p\), and for \(\theta\) given by
  \[
  \theta=\frac{1}{2(n+2)^2}\pare{\frac{p}{p-1}}\pare{\frac{2-p}{p-1}}\pare{\frac{2-p}{p-1}(4n-1)+2n(n+2)}.
  \]
\end{thrm}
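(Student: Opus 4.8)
The plan is to rerun the entropy-flow argument of Theorem~\ref{th:beckner} verbatim, the one new idea being that the nonnegative remainder discarded there is now kept and exploited. So I would fix a nonnegative bounded smooth $f$, run $\partial_t f_t = Lf_t$ with $f_0=f$, take $\Phi(X)=X^p$ and set $\Lambda(t)=\Ent^\Phi_\mu(f_t)$, $h\coloneqq f_t^{p-1}$, $q\coloneqq\frac{2-p}{p-1}$ and $U(t)\coloneqq\int f_t^p\,d\mu=\int h^{q+2}\,d\mu$. Because $L$ preserves mass, $m\coloneqq\int f_t\,d\mu$ is constant in $t$ and $U(t)=\Lambda(t)+m^p$. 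Performing the same integrations by parts and the same choice of auxiliary exponent that annihilates the $\Gamma(h,\Gamma(h))$ term, the $CD(\rho,n)$ condition gives, on exactly the range of $(n,p)$ for which $\alpha\ge 0$,
\[
\Lambda''(t)\ge -\frac{2\rho n}{n-1}\,\Lambda'(t)+\frac{p}{p-1}\,\alpha\int h^{q-2}\Gamma(h)^2\,d\mu,
\]
with $\alpha$ the nonnegative constant of Theorem~\ref{th:beckner}. Dropping the last integral recovers Theorem~\ref{th:beckner}; instead I would retain it.

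Next I would close this into a scalar differential inequality, following Remark~\ref{rk:refined_ineq}. The Cauchy–Schwarz inequality yields
\[
\int h^{q-2}\Gamma(h)^2\,d\mu\ge\frac{\pare{\int h^q\Gamma(h)\,d\mu}^2}{\int h^{q+2}\,d\mu},
\]
and since $\int h^q\Gamma(h)\,d\mu=-\frac{p-1}{p}\Lambda'(t)=-\frac{p-1}{p}U'(t)$ while $\int h^{q+2}\,d\mu=U(t)$, the previous display becomes the autonomous inequality
\[
U''(t)\ge -A\,U'(t)+\theta\,\frac{U'(t)^2}{U(t)},\qquad A\coloneqq\frac{2\rho n}{n-1},
\]
where $\theta\ge 0$ is the improvement constant obtained by collecting all the $p$- and $q$-factors; this is exactly where keeping the remainder pays off.

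The heart of the proof is then integrating this inequality. The device I would use is the auxiliary function $\Psi(t)\coloneqq U'(t)U(t)^{-\theta}+\frac{A}{1-\theta}U(t)^{1-\theta}$ for $\theta\ne 1$: a direct computation gives $\Psi'(t)=U(t)^{-\theta}\bigl(U''-\theta (U')^2/U+AU'\bigr)\ge 0$, so $\Psi$ is non-decreasing. Ergodicity forces $U(t)\to m^p$ and $U'(t)\to 0$ as $t\to\infty$, so comparing $\Psi(0)\le\Psi(\infty)$ and multiplying by $U(0)^\theta$ gives
\[
-U'(0)\ge\frac{A}{1-\theta}\pare{U(0)-m^{p(1-\theta)}U(0)^\theta}.
\]
Translating back exactly as in Theorem~\ref{th:beckner} (so that $U(0)=\int f^2\,d\mu$, $m=\int f^{2/p}\,d\mu$, and $-U'(0)$ is a fixed positive multiple of $\int\Gamma(f)\,d\mu$) turns this into $B_p^*\pare{\frac{n-1}{\rho n},\theta}$. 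The borderline $\theta=1$ is recovered by letting $\theta\to 1$, which produces the logarithmic form of Definition~\ref{bpc}.

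I expect two delicate points. First, the sign bookkeeping in the last rearrangement depends on whether $\theta<1$ or $\theta>1$: the prefactor $\frac{A}{1-\theta}$ and the difference $U(0)^{1-\theta}-m^{p(1-\theta)}$ both change sign (recall $U(0)>m^p$), and one has to check that the two regimes merge into the single statement, using the elementary inequality $\frac{x-y^{1-\theta}x^\theta}{1-\theta}\ge x-y$ already noted after Definition~\ref{bpc}. Second, and this is the genuinely error-prone part, one must verify that the constant emerging from the Cauchy–Schwarz reduction is precisely the stated $\theta$, and that the leading constant remains $\frac{n-1}{\rho n}$ (consistent with the $\theta=0$ specialization giving back Theorem~\ref{th:beckner}). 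No new curvature input is needed beyond Theorem~\ref{th:beckner}: all the geometry has already been spent in the first displayed inequality, and what remains is the scalar ODE analysis.
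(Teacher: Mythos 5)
Your proposal is correct and follows the paper's own route: you keep the nonnegative remainder $\alpha\frac{p}{p-1}\int h^{q-2}\Gamma(h)^2\,d\mu$ from the proof of Theorem~\ref{th:beckner}, bound it from below by a Cauchy--Schwarz/Jensen argument, and integrate the resulting scalar differential inequality, exactly as in the paper and in Remark~\ref{rk:refined_ineq}. If anything, your bookkeeping is the more careful one: the Jensen denominator is $\int h^{q+2}\,d\mu=\int f_t^p\,d\mu=U(t)$ rather than $\Lambda(t)=U(t)-m^p$ as written in the paper's proof, and it is precisely your $U$-based inequality, integrated via the monotone auxiliary function $\Psi$, that produces the factor $\pare{\int f^2\,d\mu}^{\theta}$ appearing in the statement of $B_p^*\pare{\frac{n-1}{\rho n},\theta}$.
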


\begin{proof}
  Coming back to the proof of theorem \ref{th:beckner}, we have that
  \[
  \int\sbra{ 2f^q\Gamma_2(h)+qh^{q-2}\Gamma(h)^2}d\mu\geq \frac{2\rho n}{n-1}\int h^q\Gamma(h)d\mu+\alpha \int h^{q-2}\Gamma(h)^2d\mu,
  \]
  or, in terms of \(\Lambda\),
  \[
  \Lambda''(t) \geq - \frac{2\rho n}{n-1}\Lambda'(t)+\alpha\frac{p}{p-1} \int h^{q-2}\Gamma(h)^2d\mu.
  \]
  Invoking Jensen's inequality, we find that
  \begin{align*}
    \int h^{q-2}\Gamma(h)^2d\mu &\geq \frac{\pare{\int h^q\Gamma(h)d\mu}^2}{\int h^{q+2}d\mu}\\
    &=\pare{\frac{p-1}{p}}^{\!\!2}\frac{\Lambda'(t)^2}{\Lambda(t)},
  \end{align*}
  so that 
  \[
  \Lambda''(t)\geq - \frac{2\rho n}{n-1}\Lambda'(t)+\alpha\frac{p}{p-1}\frac{\Lambda'(t)^2}{\Lambda(t)},
  \]
  just like we found in remark \ref{rk:refined_ineq}. Writing \(\theta=\alpha\frac{p}{p-1}\), we may now integrate this inequality, to find that
  \[
  -\frac{\Lambda'(t)}{\Lambda(t)^\theta} \leq -\frac{\Lambda'(0)}{\Lambda(0)^\theta}\exp\pare{- \frac{2\rho n}{n-1}t},
  \]
  which, integrated once more between \(0\) and \(+\infty\), leads to the claimed inequality.
\end{proof}

\begin{rmrk}

  As it turns out, the operator \(\ol L\) from last section does not verify a good enough \(CD(\rho,n)\) condition. The best constant \(c(\beta-1)\) only arises using an integrated \(CD(\rho,n)\) criterion.
  To see this, we can use the following result from \cite{bakrystflour}: the operator \(L=\Delta_g+\Gamma(V,.)\), defined on \((M,g)\),  satisfies a \(CD(\rho,n)\) condition if, and only if,
  \[
  \frac{n-d}{n}\pare{\Ric_g-\nabla^2V-\rho g}\geq\frac{1}{n}\nabla V\otimes\nabla V,
  \]
  this tensorial reformulation being valid for any \(\rho\in\R\) and \(n\not\in\sbra{0,d}\).
  As stated in the proof to theorem \ref{th:weighted_poincare}, the operator  \(\ol Lf=\vphi L f-(\beta-1)\Gamma(\vphi,f)\) on \((M,g)\) is related to a Laplace-Beltrami operator through the conformal transformation with conformal factor \(\vphi^{-1}\). Thus, writing \(\overline L=\Delta_{\overline g}+\Gamma^{\Delta_{\overline g}}(\overline V,.)\), for some explicit function \(V\), we find a somewhat easier to verify criterion for the \(CD(\rho,n)\) condition in \((M,\ol g)\):
  \begin{equation}\label{eq:tensorial_criterion}
  \Ric_g+(\beta-1)\frac{\nabla^2\vphi}{\vphi}+
  \pare{2-d-\frac{(2\beta-1)^2}{n-d}}\frac{\nabla\vphi\otimes\nabla\vphi}{4\vphi^2}
  +\pare{\frac{\Delta\vphi}{2\vphi}-2\beta\frac{\Gamma(\vphi)}{4\vphi^2}-\frac{\rho}{\vphi}}g\geq 0.
  \end{equation}
  We leave it to the courageous to verify that indeed, even in the case where everything is nice and explicit, for instance for \(\vphi(x)=1+\abs{x}^2\), there exists no couple \((\rho,n)\in\R_+^*\times\pare{\R\backslash{\sbra{0,d}}}\) such that inequality \eqref{eq:tensorial_criterion} is verified and
  \[
  \frac{\rho n}{n-1}=c(\beta-1),
  \]
  making theorem \ref{th:weighted_poincare} truly an \emph{integrated} \(CD(\rho,n)\) criterion result.
\end{rmrk}


\section{Results on the real line}
\label{sec-5}

In dimension \(d=1\), simplifications happen, so that we are able to do calculations directly. All manifolds of dimension \(1\) are conformal to \((\R,1)\), and even though the study of a generic manifold \((\R,g)\) does not reduce exactly to the study of \((\R,1)\), the calculations are similar, and so we will only consider the Lebesgue measure \(\lambda\) for the reference measure.
Following the work in section \ref{sec-3}, assume \(\vphi\) is a \(C^2\), positive, convex function such that
\[
\vphi''\geq c 
\]
for some constant \(c>0\). Fix \(\beta\in\R\) such that \(\vphi^{-\beta}\) is in \(L^1(\R,\lambda)\). Then, we find the following result version of the Poincaré inequality, for the probability measure \(\mu_{\vphi,\beta}=Z_{\vphi,\beta}\vphi^{-\beta}\lambda\)
\begin{thrm}\label{th:weighted_poincare_1d}
  Fix a real number \(\beta>1\). Then for all smooth bounded functions \(f\), 
  \begin{equation}\label{eq:poincare_1d}
    \int f^2d\mu_{\vphi,\beta}-\pare{\int fd\mu_{\vphi,\beta}}^2\leq\frac{1}{c(\beta-1)}\int (f')^2\vphi d\mu_{\vphi,\beta}.
  \end{equation}
\end{thrm}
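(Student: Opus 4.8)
The plan is to specialize the entropy-flow argument of Section~\ref{sec-3.1} to the one-dimensional setting, where the curvature-dimension condition degenerates into an \emph{equality}, and to exploit this to lower the admissible range of \(\beta\) from \(\beta\geq2\) down to \(\beta>1\). On \((\R,1)\) with the Lebesgue reference measure one has \(Lf=f''\), \(\Gamma(f)=(f')^2\), and a direct computation gives \(\Gamma_2(f)=(f'')^2=(Lf)^2\); in particular \(L\) satisfies \(CD(0,1)\) with equality. Following Section~\ref{sec-3.1}, I would fix \(\beta>1\), \(\beta\neq2\), introduce
\[
\ol L f=\vphi f''-(\beta-1)\vphi' f',
\]
and check, by a single integration by parts, that \(\mu_{\vphi,\beta}=Z_{\vphi,\beta}\vphi^{-\beta}\lambda\) is its reversible measure, so that \(\ol\Gamma=\vphi\Gamma\) and \(\bar\mu=\mu_{\vphi,\beta}\), exactly as before.

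Next I would run the flow \(\partial_t f_t=\ol L f_t\), \(f_0=f\), and track the variance \(\Lambda(t)=\Var_{\bar\mu}(f_t)\). Differentiating and integrating by parts against \(\bar\mu\) gives \(\Lambda'(t)=-2\int\vphi(f_t')^2\,d\bar\mu\) and \(\Lambda''(t)=4\int\ol\Gamma_2(f_t)\,d\bar\mu\). Since we are in the exact situation of Lemma~\ref{lem-1} (with \(c\) the constant in \(\vphi''\geq c\)), that lemma applies verbatim for every \(\beta>1\), \(\beta\neq2\), yielding
\[
\Lambda''(t)\geq-2c(\beta-1)\Lambda'(t)+\frac{4}{\beta-2}\int\vphi^2\sbra{(\beta-1)\Gamma_2(f_t)-(Lf_t)^2}\,d\bar\mu.
\]

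The key one-dimensional simplification enters now: because \(\Gamma_2(f_t)=(Lf_t)^2=(f_t'')^2\), the bracketed term equals \((\beta-2)(f_t'')^2\), so the last integral becomes \(\frac{4}{\beta-2}\int\vphi^2(\beta-2)(f_t'')^2\,d\bar\mu=4\int\vphi^2(f_t'')^2\,d\bar\mu\geq0\), \emph{whatever the sign of \(\beta-2\)}. This is precisely where the one-dimensional case improves on Theorem~\ref{th:weighted_poincare}: the factor \((\beta-2)\) that forced \(\beta\geq n+1=2\) in higher dimension is cancelled, and one is left with
\[
\Lambda''(t)\geq-2c(\beta-1)\Lambda'(t)
\]
for every \(\beta>1\), \(\beta\neq2\).

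Finally I would integrate this differential inequality exactly as in the proof of Theorem~\ref{th:weighted_poincare}: once between \(0\) and \(t\) gives \(-\Lambda'(t)\leq-\Lambda'(0)e^{-2c(\beta-1)t}\), and once more between \(0\) and \(+\infty\), together with the ergodicity \(\Lambda(t)\to0\), gives \(\Lambda(0)\leq\frac{-1}{2c(\beta-1)}\Lambda'(0)\), which is exactly~\eqref{eq:poincare_1d}; the value \(\beta=2\) is recovered by letting \(\beta\to2\) in the inequality. I expect the only genuine work to be the analytic justification — well-posedness, smoothness, and \(L^2(\bar\mu)\)-ergodicity of the flow, together with the integrability needed for the integrations by parts on the noncompact line — which, as in the previous sections, I would settle by a truncation and approximation argument reducing to smooth bounded \(f\).
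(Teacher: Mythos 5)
Your proof is correct, but it takes a genuinely different route from the paper's. You specialize the entropy-flow argument of Section~\ref{sec-3.1}: since Lemma~\ref{lem-1} only requires \(\beta>1\), \(\beta\neq2\), and since in dimension one the flat Laplacian gives \(\Gamma_2(f)=(f'')^2=(Lf)^2\) exactly, the residual term \(\frac{4}{\beta-2}\int\vphi^2\sbra{(\beta-1)\Gamma_2(f_t)-(Lf_t)^2}d\bar\mu\) collapses to \(4\int\vphi^2(f_t'')^2d\bar\mu\geq0\) with the sign of \(\beta-2\) cancelling, so the threshold \(\beta\geq n+1=2\) disappears and the differential inequality \(\Lambda''\geq-2c(\beta-1)\Lambda'\) integrates to the claim for every \(\beta>1\). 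The paper instead computes \(\ol\Gamma_2\) pointwise for \(\ol Lf=\vphi f''-(\beta-1)\vphi'f'\), checks a discriminant condition to establish the \emph{pointwise} curvature-dimension condition \(CD\pare{c\pare{\beta-\tfrac12},2(1-\beta)}\) with negative dimension, and then invokes the general Poincaré inequality under \(CD(\rho,n)\) (Theorem~\ref{th:poincare}), noting that \(\frac{n-1}{\rho n}=\frac{1}{c(\beta-1)}\). Your argument is more self-contained and makes transparent exactly why dimension one removes the restriction \(\beta>2\); the paper's route buys more, since the explicit family of admissible \((\rho,n)\) feeds into Theorem~\ref{th:beckner} to give Beckner inequalities for a range of \(p\), and, by optimizing over \((\rho,n)\), the sharper constant \(\pare{\beta-\tfrac12}^2\) for \(\beta\in\pare{\tfrac12,\tfrac32}\) in the proposition that follows. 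The analytic caveats you flag (well-posedness, ergodicity, justification of the integrations by parts on the line) are treated at the same level of detail in the paper, so they are not a gap relative to it.
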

For \(\beta>2\), this is in fact theorem \ref{th:weighted_poincare}, so this theorem is an extension to smaller exponants \(\beta\).

\begin{proof}
  The idea is to apply Theorem \ref{th:beckner} to the special case of the operator \(\ol L\) defined by
  \[
  \ol Lf = \vphi f''-(\beta-1)\vphi' f',
  \]
  after proving it satisfies a \(CD(\rho,n)\) condition with negative dimension. Since we are working in dimension \(1\), expliciting the \(CD(\rho,n)\) is not too hard, and that is just what we do. Indeed, straightforward computations yield
  \begin{align*}
    \ol\Gamma_2(f)&=\frac{1}{2}\ol L(\ol \Gamma (f))-\ol\Gamma(f,\ol Lf)\\
    &=\frac{1}{2}\pare{(2\beta-1)\vphi\vphi''+(1-\beta)(\vphi')^2}(f')^2 + \vphi\vphi' f'f'' +\vphi^2 (f'')^2,
  \end{align*}
  so that, given \(\rho\geq 0\) and \(n\in\R\backslash\sbra{0,1}\),
  \[
  \ol\Gamma_2(f)\geq \rho\ol\Gamma(f)+\frac{1}{n}(\ol Lf)^2
  \]
  for all smooth functions \(f\) if, and only if,
  \[
  A(f')^2+Bf'f''+C(f'')^2 \geq 0
  \]
  for all \(f\), where
  \[
  \left\{
  \begin{aligned}
    A&=\frac{1}{2}\pare{(2\beta-1)\vphi\vphi''+(1-\beta)(\vphi')^2} -\rho\vphi-\frac{1}{n}(1-\beta)^2(\vphi')^2,\\
    B&= \frac{1}{n}(n+2\beta-2)\vphi\vphi',\\
    C&= \pare{1-\frac{1}{n}}\vphi^2.
  \end{aligned}
  \right.
  \]
  since \(C\geq 0\), this is in turn equivalent to \(B^2-4AC\leq 0\), which, after simplifications, boils down to the condition
  \begin{equation}\label{eq:CD_condition_1d}
    -\pare{\beta-\frac{1}{2}}\pare{\frac{1}{n-1}\pare{\beta-\frac{1}{2}}+\frac{1}{2}}\frac{(\vphi')^2}{\vphi}+\pare{\beta-\frac{1}{2}}\vphi''-\rho \geq 0.
  \end{equation}
  For \(n=2(1-\beta)\), the first term in the above inequality disappears and the condition becomes
  \[
  \pare{\beta-\frac{1}{2}}\vphi''-\rho \geq 0,
  \]
  which is clearly true for \(\rho=c\pare{\beta-\frac{1}{2}}\), which proves that \(\ol L\) satisfies the \(CD(c\pare{\beta-\frac{1}{2}},2(1-\beta))\) condition. We may now apply the theorem \ref{th:poincare} to conclude that the Poincaré inequality is valid with constant
  \[
  \frac{n-1}{\rho n}=\frac{1-2\beta}{c(1-\beta)(2\beta-1)}=\frac{1}{c(\beta-1)},
  \]
  which is the claimed result.
\end{proof}
This proof proves more than just the Poincaré inequality, since theorem \ref{th:beckner} provides a range of exponants for which the Beckner inequality holds, but this is only true when \(n<-2\), which corresponds to \(\beta>2\). As far as Beckner inequalities go, this result is exactly the same as the one in section \ref{sec-3}, and as such only constitutes an example of application of the results in section \ref{se:CD(rho,n)}. The interest, however, lies in the fact that we extend the range of \(\beta\) for which the Poincaré inequality is valid.

As it turns out, this inequality is optimal for \(\beta \geq 3/2\), but it is not optimal anymore for \(\beta\in\left[1,3/2\right)\), as proved in \cite{joulin} for the function \(\vphi(x)=1+x^2\). In fact, they find that inequality \eqref{eq:poincare_1d} is valid for \(\beta\in\left(1/2,3/2\right]\), and the optimal constant in that range changes from \((2(\beta-1))^{-1}\) to \((\beta-1/2)^{-2}\). It might be worth noting that the method presented in theorem \ref{th:weighted_poincare_1d} actually works for the full range of \(\beta\), as summed up in the following proposition
\begin{prop}
  Let \(\vphi:x\mapsto 1+ x^2\). Fix a real number \(\beta>\frac{1}{2}\). Then for all smooth bounded functions \(f\),
  \begin{equation}\label{eq:poincare_1d_sharp}
    \int f^2d\mu_{\vphi,\beta}-\pare{\int fd\mu_{\vphi,\beta}}^2\leq \frac{1}{C_\beta} \int (f')^2\vphi d\mu_{\vphi,\beta},
  \end{equation}
  where
  \begin{equation}\label{eq:Cbeta}
    C_\beta=\left\{\begin{aligned}
    2(\beta-1) & \text{if }\beta \geq \frac{3}{2}, \\
    \pare{\beta-\frac{1}{2}}^2 & \text{if }\beta \in \pare{\frac{1}{2},\frac{3}{2}}.
    \end{aligned}\right.
  \end{equation}
\end{prop}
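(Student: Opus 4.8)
The plan is to reuse, essentially verbatim, the strategy behind Theorem~\ref{th:weighted_poincare_1d}: produce a curvature-dimension condition for the conformal operator \(\ol Lf=\vphi f''-(\beta-1)\vphi'f'\) and then invoke the sharp Poincaré inequality of Theorem~\ref{th:poincare}. The single new ingredient is that, for the explicit weight \(\vphi(x)=1+x^2\), the quantity \((\vphi')^2/\vphi=4x^2/(1+x^2)\) is \emph{bounded}, taking values in \([0,4)\). In the general Theorem~\ref{th:weighted_poincare_1d} the unboundedness of \((\vphi')^2/\vphi\) forced the choice of \(n\) that annihilates the coefficient of that term; here boundedness frees us to keep that coefficient and absorb its finite worst-case contribution into \(\rho\), which is exactly what yields the better constant.

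Concretely, I would specialize the criterion \eqref{eq:CD_condition_1d} to \(\vphi=1+x^2\) (so \(\vphi''=2\)):
\[
-\pare{\beta-\tfrac12}\pare{\frac{1}{n-1}\pare{\beta-\tfrac12}+\tfrac12}\frac{4x^2}{1+x^2}+2\pare{\beta-\tfrac12}-\rho\geq0\qquad\text{for all }x.
\]
For \(\beta\geq3/2\) there is nothing to do: this is Theorem~\ref{th:weighted_poincare_1d} applied with \(c=2\), giving \(C_\beta=2(\beta-1)\). For \(\beta\in(1/2,3/2)\) I would instead simply set \(n=-1\). Writing \(b=\beta-\tfrac12\in(0,1)\), the coefficient \(K\coloneqq b\pare{\tfrac{b}{n-1}+\tfrac12}=\tfrac12 b(1-b)\) is then nonnegative, so the first term is nonpositive; since \(4x^2/(1+x^2)<4\), the left-hand side is bounded below by its limit as \(x\to\infty\), namely \(2b-4K-\rho\). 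A short computation gives \(2b-4K=2b^2\), so choosing \(\rho=2b^2\) makes the criterion hold (strictly for finite \(x\)), i.e. \(\ol L\) satisfies \(CD\pare{2(\beta-\tfrac12)^2,-1}\).

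It then remains to feed this into Theorem~\ref{th:poincare}, which is legitimate since \(n=-1\in\R\backslash[0,1)\): the resulting Poincaré constant is \(\frac{n-1}{\rho n}=\frac{1}{b^2}\), that is \(C_\beta=(\beta-\tfrac12)^2\). The two regimes match at \(\beta=3/2\) (where \(n=-1=2(1-\beta)\) and \(K=0\)), which is what makes \eqref{eq:Cbeta} continuous; note also that the \(n=-1\) route is valid on the whole range \(\beta>1/2\), so it genuinely extends Theorem~\ref{th:weighted_poincare_1d} below \(\beta=1\), not merely improves its constant on \((1,3/2)\). Integrability of \(\vphi^{-\beta}\), hence the fact that \(\mu_{\vphi,\beta}\) is a probability measure, holds exactly for \(\beta>1/2\), matching the stated range.

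The main obstacle---really the only conceptual step---is recognizing this decoupling and then carrying out the optimization of \(\frac{\rho n}{n-1}\) over admissible \((\rho,n)\): taking \(\rho\) as large as the criterion allows reduces the problem to maximizing \(-\tfrac{4b^2 n}{(n-1)^2}\), whose unconstrained maximizer is \(n=-1\). The feasibility condition \(K\geq0\), equivalently \(n\leq 2(1-\beta)\), is precisely what forces \(\beta\leq 3/2\) for \(n=-1\) to be admissible, and beyond that threshold the optimum returns to the boundary \(n=2(1-\beta)\), recovering \(2(\beta-1)\). Everything else---the entropy-flow derivation underlying Theorem~\ref{th:poincare} and the passage from smooth bounded functions to the general case by approximation---is inherited without change.
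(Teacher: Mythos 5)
Your proof is correct and follows essentially the same route as the paper: both specialize the one-dimensional criterion \eqref{eq:CD_condition_1d} to \(\vphi(x)=1+x^2\), observe that the admissible pairs \((\rho,n)\) are characterized by \(\rho\leq 2\beta-1\) and \(\rho(1-n)\leq(2\beta-1)^2\), optimize \(\rho n/(n-1)\) to land on \(CD(2\beta-1,2(1-\beta))\) for \(\beta\geq 3/2\) and \(CD\pare{2(\beta-\tfrac12)^2,-1}\) for \(\beta\in(\tfrac12,\tfrac32)\), and conclude via Theorem~\ref{th:poincare}. The only cosmetic difference is that you guess \(n=-1\) and verify it is the unconstrained maximizer, whereas the paper runs the two-parameter optimization explicitly; the resulting constants coincide.
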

\begin{proof}
  The proof builds on the proof for theorem \ref{th:weighted_poincare_1d}, using the explicit form of \(\vphi\). The condition \eqref{eq:CD_condition_1d} becomes
  \[
  -(2\beta-1)\pare{\frac{1}{n-1}(2\beta-1)+1}\frac{x^2}{1+x^2}+(2\beta-1)-\rho \geq 0,
  \]
  for all \(x\in\R\), or, equivalently,
  \[
  \rho\leq 2\beta-1\quad\text{and}\quad -\frac{(2\beta-1)^2}{n-1}-\rho\geq 0.
  \]
  Restricting our study to nonnegative curvatures, we thus prove that the condition \(CD(\rho,n)\) (with \(\rho>0\)) is satisfied if, and only if,
  \[
  0< \rho \leq 2\beta-1\quad\text{and}\quad 0<\rho(1-n)\leq (2\beta-1)^2\quad(\text{and }n\not\in\sbra{0,1}).
  \]
  We are looking, for a fixed \(\beta>1/2\), for the parameters \((\rho,n)\) satisfying those criteria that lead to the best possible value of \(\rho n/(n-1)\). 
  
  Assuming \((\rho,n)\) is such a couple, since \(n/(n-1)\) is decreasing on \(\R_-^*\), it is necessary that \(\rho(1-n)=(2\beta-1)^2\), which then implies that \(\rho<2(\beta-1)^2\). We thus reduce the problem to finding the maximum of the function
  \[
  \frac{\rho n}{n-1}=\rho-\frac{\rho^2}{(2\beta-1)^2}
  \]
  under the constraints \(0<\rho<\max(2\beta-1,(2\beta-1)^2)\). An easy study yields that this maximum is \(C_\beta\) as defined in equation \eqref{eq:Cbeta}. In other terms, the operator \(Lf=(1+x^2)f''+(1-\beta)2xf'\) satisfies 
  \begin{itemize}
  \item \(CD(2\beta-1,2(1-\beta))\) when \(\beta\geq 3/2\),
  \item \(CD((2\beta-1)^2/2,-1)\) when \(\beta\in\pare{1/2,3/2}\),
  \end{itemize}
  and this leads to the proposition.
\end{proof}

\begin{rmrk}
  
  This method is applicable to other functions than just \(x\mapsto 1+x^2\). For instance, for the function \(\vphi:x\mapsto 1+x^2+x^4\), one finds that the condition \(CD(2\beta-1,4\beta-1)\) is satisfied, and it is the one that leads to the best possible \(\rho n/(n-1)\) constant for the corresponding operator.
\end{rmrk}


\bibliographystyle{alpha}
\bibliography{biblio-beckner}

\end{document}